%
%
%
%
%
\RequirePackage{fix-cm}
\documentclass[smallextended,referee,envcountsect]{svjour3}       
\smartqed  
\usepackage{graphicx}
\usepackage{amsmath, ntheorem}
\usepackage{amssymb}
\usepackage{stackrel}
\usepackage{marvosym, enumerate}
\usepackage{amstext, amscd, latexsym}
\usepackage{amsfonts, ragged2e}
\usepackage{mathrsfs, pgfplots, enumerate, setspace}
\usepackage{subfigure}
\usepackage{cases}

\smartqed
\usepackage{cite}
\usepackage{amstext, graphicx, amscd, latexsym, amssymb}
\usepackage{amsfonts, ragged2e}
\usepackage{pgfplots, setspace}
\usepackage{geometry}
\usepackage{latexsym,bm}
\usepackage{float}
 \usepackage{epstopdf,caption}

\itemsep=0cm \arraycolsep=0pt

\newtheorem{theo}{Theorem}
\newtheorem{defi}{Definition}
\newtheorem{lem}{Lemma}
\newtheorem{rem}{Remark}

\newtheorem{assumption}{Assumption}
\newtheorem{prop}{Proposition}

\usepackage[figuresright]{rotating}
\usepackage[sort&compress,numbers]{natbib}
\usepackage{algorithm} 
\usepackage{algorithmic}  
\usepackage[linesnumbered,algo2e]{algorithm2e} 
\usepackage[colorlinks,linkcolor=blue,anchorcolor=blue,citecolor=blue]{hyperref}

\setlength{\parindent}{2em}

\begin{document}

\title{Convergence rates analysis of Interior Bregman Gradient Method for Vector Optimization Problems}

\titlerunning{Convergence rates analysis of Interior Bregman Gradient Method for Vector Optimization Problems}        

\author{Jian Chen \and Liping Tang \and  Xinmin Yang  }

\institute{J. Chen \at Department of Mathematics, Shanghai University,
	Shanghai 200444, China\\
                    chenjian\_math@163.com\\
                   L.P. Tang \at National Center for Applied Mathematics of Chongqing, Chongqing 401331, China\\
                   tanglipings@163.com\\
        \Letter X.M. Yang \at National Center for Applied Mathematics of Chongqing, and School of Mathematical Sciences,  Chongqing Normal University, Chongqing 401331, China\\
        xmyang@cqnu.edu.cn  \\}

\date{Received: date / Accepted: date}

\maketitle

\begin{abstract}
In recent years, by using Bregman distance, the Lipschitz gradient continuity and strong convexity were lifted and replaced by relative smoothness and relative strong convexity. Under the mild assumptions, it was proved that gradient methods with Bregman regularity converge linearly for single-objective optimization problems (SOPs). In this paper, we extend the relative smoothness and relative strong convexity to vector-valued functions and analyze the convergence of an interior Bregman gradient method for vector optimization problems (VOPs). Specifically, the global convergence rates are $\mathcal{O}(\frac{1}{k})$ and $\mathcal{O}(r^{k})(0<r<1)$ for convex and relative strongly convex VOPs, respectively. Moreover, the proposed method converges linearly for VOPs that satisfy a vector Bregman-PL inequality.

\keywords{Vector optimization \and Bregman distance \and Relative smoothness \and Relative strong convexity \and Linear convergence}
\subclass{65K05 \and 90C26 \and 90C29 \and 90C30}
\end{abstract}

\section{Introduction}
Let $C\subset\mathbb{R}^{m}$ be a closed, convex, and pointed cone with a non-empty interior. The partial order induced by $C$:
$$y\preceq_{C}(\mathrm{resp.} \prec_{C})y^{\prime}\ \Leftrightarrow\ y^{\prime}-y\in C (\mathrm{resp.}\ \mathrm{int}(C)).$$
In this paper, we consider the the following vector optimization problem
\begin{align*}
	\min\limits_{x\in \Omega} F(x), \tag{VOP}\label{VOP}
\end{align*}
where every component $F_{i}:\mathbb{R}^{n}\rightarrow\mathbb{R}^{m}$ is differentiable, and $\Omega\subset\mathbb{R}^{n}$ is closed and convex with a non-empty interior.
For the optimal solutions of (\ref{VOP}), none of the objectives can be improved without sacrificing the others. The concept of optimality is thus defined as \textit{efficiency}.
It is worth noting that (\ref{VOP}) corresponds to a multiobjective optimization problem 
when $C=\mathbb{R}^{m}_{+}$, where $\mathbb{R}^{m}_{+}$ is the non-negative orthant of $\mathbb{R}^{m}$. Some applications of multiobjective optimization problems can be found in engineering \cite{1a}, economics \cite{1b,eco}, and machine learning \cite{mac1,m2,mac2}, etc. 
\par Scalarization approaches \cite{mnp,luc,joh} are widely explored for VOPs, which convert a VOP into a single-objective optimization problem so that classical numerical optimization methods can be applied to obtain the solutions to the original problem. Unfortunately, the approaches burden the decision-maker with some parameters which are unknown in advance. To overcome the drawback, Fliege and Svaiter \cite{6} invented the parameter-free method, called the steepest descent method for multiobjective optimization. The corresponding method for VOPs is described in \cite{vsd}. Motivated by the work of Fliege and Svaiter, some standard first-order method methods are extended to solve MOPs, and VOPs \cite{8,9a,9b,9c,9d,chen}. The main idea of these methods is to compute a descent direction; then, a line search technique is performed along the direction to ensure sufficient decrease for all objective functions. By the strategy, these descent methods produce a sequence of points that enjoy global convergence. Very recently, it was proved that the convergence rates of the multiobjective gradient descent method \cite{com} and proximal gradient method \cite{pcom} are the same as the ones for SOPs. 
\par Similar to the first-order method for SOPs, a widely used assumption for MOPs is that the objective function gradients have to be globally Lipschitz continuous, which guarantees a sufficient descent in each iteration. By using Bregman distance, the restrictive assumption was replaced by the Lipschitz-like condition \cite{lc}, or relative smoothness \cite{rs} for SOPs. On the other hand, the relative strong convexity was proposed to replace strong convexity in \cite{rs}. Under the mild assumptions, the linear convergence results were established for gradient \cite{rs} and proximal gradient method \cite{as} with Bregman regularization. Moreover, the linear convergence results of the gradient method were obtained with Bregman error bound condition \cite{nlns}. Auslender and Teboulle \cite{ip} presented another advantage of the Bregman method: the produced sequence can be restricted in the interior of the feasible set by choosing a suitable Bregman distance and thus eliminate the constraints. 
\par The Bregman gradient methods were also explored in VOPs. Villacorta and Oliveira \cite{mip} developed the interior Bregman gradient method with a modified convergence sensing condition for convex VOPs with a non-polyhedral set. In \cite{cz}. Chen et al. introduced the vector-valued Bregman function for MOPs, and the corresponding vector-valued Bregman distance replaced the Euclidean distance in the proximal point algorithm (PPA). Very recently, the PPA with vector-valued Bregman distance was applied to solve multiobjective DC programming in \cite{2022}. Moreover, the proximal gradient method with Bregman functions for MOPs has been investigated by Ansary and Dutta \cite{2022a}.
\par It should be noted that convergence analysis with the appealing potential of Bregman distance was not discussed in the methods mentioned above. Naturally, a question arises can we obtain linear convergence results of the Bregman gradient method for VOPs without Lipschitz gradient continuity and strong convexity. The paper's main contribution is to answer the question positively. More specifically:
\begin{itemize}
	\item[$\bullet$] We extend the relative smoothness and relative strong convexity to vector-valued functions in the context of VOPs;
	\item[$\bullet$] We present two merit functions and analyze their properties, such as optimality, continuity, and interrelation. We also proposed a generalized vector Bregman-PL inequality, which will be used to prove the linear convergence of the proposed method.
	\item[$\bullet$] We propose an interior Bregman gradient method for VOPs that restricts the produced sequence inside the feasible set. It is proved that every accumulation point of the produced sequence is $C$-stationary point for VOP, and the whole sequence converges to a weakly efficient set with $C$-convex objective functions.
	\item[$\bullet$] With relative smooth and strongly convex objective functions, we prove the produced sequence converges linearly to a weakly efficient point in the sense of Bregman distance. We also prove that a merit function converges linearly with the vector Bregman-PL inequality.
\end{itemize}
\par The organization of the paper is as follows. Some notations and definitions are given in Sect. 2 for our later use. Sect. 3 discusses the merit functions for VOPs. Sect. 4 is devoted to introducing the interior Bregman gradient method and proving the convergence results for the proposed method. At the end of the paper, some conclusions are drawn.
\section{Preliminaries}
\subsection{Notations}
\begin{itemize}
	\item[$\bullet$] $[m]=\{1,2,...,m\}$.\\
	
	\item[$\bullet$]$\Delta^{+}_{m}=\left\{\lambda:\sum\limits_{i\in[m]}\lambda_{i}=1,\lambda_{i}>0,\ i\in[m]\right\}$ the relative interior of $m$-dimensional unit simplex.\\
	 
	\item[$\bullet$]  $\|\cdot\|$ the Euclidean norm, $\langle\cdot,\cdot\rangle$ the inner product.\\
	\item[$\bullet$] $\mathrm{int}(\cdot)$ and $\mathrm{cl}(\cdot)$ the interior and the closure of a set, respectively.\\
	\item[$\bullet$] $\mathrm{conv}(\cdot)$ the convex hull of a set.\\
	\item[$\bullet$] $B[x,r]$ the closed ball centred at $x$ with radius $r$.\\
	\item[$\bullet$] $JF(x)\in\mathbb{R}^{m\times n}$ and $\nabla F_{i}(x)\in\mathbb{R}^{n}$ the Jacobian matrix and the gradient of $F_{i}$ at $x$, respectively.\\
	\item[$\bullet$] $C^{*}=\{c^{*}\in\mathbb{R}^{m}:\langle c^{*},c\rangle\geq0, \forall c\in C\}$ the positive polar cone of $C$.\\
	\item[$\bullet$] $G=\mathrm{conv}(\{c^{*}\in\mathbb{R}^{m}:\|c^{*}\|=1, \forall c^{*}\in C^{*}\})$.\\
	\item[$\bullet$] $\omega^{*}(x^{*})=\sup\limits_{x\in\mathbb{R}^{n}}\{\langle x^{*},x\rangle-\omega(x)\}$ the conjugate function of $\omega$.\\
\end{itemize}
\subsection{Vector optimization}
Recall some definitions of solutions to (\ref{VOP}) as follows.

\begin{defi}{\rm\cite{joh}}
	A vector $x^{*}\in\Omega$ is called efficient solution to (\ref{VOP}), if there exists no $x\in\Omega$ such that $F(x)\preceq_{C} F(x^{\ast})$ and $F(x)\neq F(x^{\ast})$.
\end{defi}

\begin{defi}{\rm\cite{joh}}\label{we}
	A vector $x^{*}\in\Omega$ is called weakly efficient solution to (\ref{VOP}), if there exists no $x\in\Omega$ such that $F(x)\prec_{C} F(x^{\ast})$.
\end{defi}

\begin{defi}{\rm\cite{vsd}}\label{st}
	A vector $x^{\ast}\in\Omega$ is called $C$-stationary point to (\ref{VOP}), if
	$$JF(x^{*})(\Omega-x)\cap(-\mathrm{int}(C))=\emptyset.$$
\end{defi}

For a non-stationary point $x$, there exists a descent direction that is defined as:
\begin{defi}{\rm\cite{vsd}}\label{cd}
	A vector $d\in\mathbb{R}^{n}$ is called $C$-descent direction for $F$ at $x$, if
	$$JF(x)d\in-\mathrm{int}(C).$$
\end{defi}
\begin{rem}\label{rdd}
	If $x\in\Omega$ is a non-stationary point, then there exists a vector $y\in\Omega$ such that $JF(x)(y-x)\in-\mathrm{int}(C)$. Note that $\mathrm{int}(C)$ is an open set, the relation is also valid for some $y\in\mathrm{int}(\Omega)$. It follows from Definition \ref{cd} that $y-x$ is a $C$-descent direction. 
\end{rem}

\begin{defi}{\rm\cite{joh}}
	A function $F(\cdot)$ is called $C$-convex on $\Omega$, if
	$$F(\lambda x+(1-\lambda)y)\preceq_{C}\lambda F(x)+(1-\lambda)F(y),\ \forall x,y\in\Omega,\ \lambda\in[0,1].$$
\end{defi}

Since $F(\cdot)$ is differentiable, $C$-convexity of $F(\cdot)$ on $\Omega$ is equivalent to
$$JF(x)(y-x)\preceq_{C}F(y)-F(x),\ \forall x,y\in\Omega.$$
By using the positive polar cone $C^{*}$, an equivalent characterization of $C$-convexity of $F(\cdot)$ on $\Omega$ is presented as follows.
\begin{lem}{\rm{\cite{luc}}}
	A function $F(\cdot)$ is $C$-convex on $\Omega$ if and only if $\langle c^{*},F(\cdot)\rangle$ is convex on $\Omega$ for all $c^{*}\in C^{*}$.
\end{lem}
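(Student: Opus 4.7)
The plan is to prove both directions of the biconditional, using only the definition of $C$-convexity and basic duality for the closed convex cone $C$.

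For the forward implication, I would fix $c^{*}\in C^{*}$, $x,y\in\Omega$, and $\lambda\in[0,1]$. By the definition of $C$-convexity, the vector $\lambda F(x)+(1-\lambda)F(y)-F(\lambda x+(1-\lambda)y)$ lies in $C$. Then, since $c^{*}\in C^{*}$ satisfies $\langle c^{*},c\rangle\ge 0$ for every $c\in C$, pairing with $c^{*}$ immediately yields
\[
\langle c^{*},F(\lambda x+(1-\lambda)y)\rangle\le\lambda\langle c^{*},F(x)\rangle+(1-\lambda)\langle c^{*},F(y)\rangle,
\]
which is exactly the convexity of the scalar function $\langle c^{*},F(\cdot)\rangle$ on $\Omega$.

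For the reverse implication, I would argue by contradiction. Suppose $F$ is not $C$-convex, so there exist $x,y\in\Omega$ and $\lambda\in[0,1]$ with
\[
z:=\lambda F(x)+(1-\lambda)F(y)-F(\lambda x+(1-\lambda)y)\notin C.
\]
Because $C$ is a closed convex cone, the bipolar theorem gives $C=(C^{*})^{*}$, i.e. $C=\{v\in\mathbb{R}^{m}:\langle c^{*},v\rangle\ge 0\ \text{for all }c^{*}\in C^{*}\}$. Thus there exists $\bar c^{*}\in C^{*}$ with $\langle\bar c^{*},z\rangle<0$. Rearranging this inequality contradicts the assumed convexity of $\langle\bar c^{*},F(\cdot)\rangle$ at the same triple $(x,y,\lambda)$.

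The only genuinely nontrivial step is the bipolar/separation appeal in the reverse direction; everything else is pure bookkeeping. I would cite (or briefly recall) that closedness, convexity, and pointedness of $C$ together with non-empty interior ensure $C=(C^{*})^{*}$, so that membership in $C$ is exactly characterized by non-negativity against all elements of $C^{*}$. Once that identity is in hand, both implications become one-line consequences, which is why the statement is usually attributed to standard references such as Luc's monograph rather than proved from scratch.
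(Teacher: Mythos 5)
Your proof is correct. The paper does not prove this lemma at all --- it is quoted from Luc's monograph \cite{luc} --- and your argument (pairing with $c^{*}$ for the forward direction, the bipolar identity $C=(C^{*})^{*}$ plus separation for the converse) is exactly the standard proof one finds there. One small remark: the bipolar step needs only that $C$ is closed and convex; pointedness and non-empty interior, which you list among the ingredients, are not required for $C=(C^{*})^{*}$, so your appeal is valid but invokes more hypotheses than necessary.
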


\begin{rem}\label{rem1}
	Note that $\frac{c^{*}}{\|c^{*}\|}\in G$ for all $c^{*}\in C^{*}\setminus\{0\}$, then $F(\cdot)$ is $C$-convex on $\Omega$ if and only if $\langle c^{*},F(\cdot)\rangle$ is convex on $\Omega$ for all $c^{*}\in G$.
\end{rem}

In $C$-convex case, we derive the equivalence between $C$-stationary point and weakly efficient solution. 
\begin{lem}{\rm\cite{joh}}
	Assume that the objective function $F(\cdot)$ is $C$-convex on $\Omega$. Then $x^{*}\in\Omega$ is a $C$-stationary point of (\ref{VOP}) if and only if $x^{*}$ is an efficient solution of (\ref{VOP}).
\end{lem}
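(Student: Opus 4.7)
The plan is to establish both implications by contradiction, leaning on the first-order characterization of $C$-convexity noted right before the lemma, namely $JF(x)(y-x)\preceq_{C}F(y)-F(x)$ for all $x,y\in\Omega$.

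For the forward direction, assume $x^{*}$ is efficient but fails to be $C$-stationary. Definition \ref{st}, together with Remark \ref{rdd}, then produces $y\in\Omega$ with $JF(x^{*})(y-x^{*})\in -\mathrm{int}(C)$. The gradient inequality rewrites
\[
F(y)-F(x^{*})\in JF(x^{*})(y-x^{*})+C\subseteq -\mathrm{int}(C)+C\subseteq -\mathrm{int}(C),
\]
where the last inclusion uses the standard fact that $\mathrm{int}(C)+C\subseteq\mathrm{int}(C)$ in a convex cone. Pointedness of $C$ forces $0\notin\mathrm{int}(C)$, so $F(y)\preceq_{C}F(x^{*})$ with $F(y)\neq F(x^{*})$, contradicting efficiency.

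For the converse, assume $x^{*}$ is $C$-stationary but not efficient, and pick $y\in\Omega$ with $F(x^{*})-F(y)\in C\setminus\{0\}$. The gradient inequality gives $JF(x^{*})(y-x^{*})\preceq_{C}F(y)-F(x^{*})\in -C$, so a priori only $JF(x^{*})(y-x^{*})\in -C$. To contradict Definition \ref{st} the stronger containment $JF(x^{*})(y-x^{*})\in -\mathrm{int}(C)$ is required. This promotion from the closed cone to its interior is the principal obstacle, because the gradient inequality, being non-strict, does not by itself distinguish boundary from interior directions, whereas efficiency (unlike weak efficiency) must be blocked by directions that may land only on the boundary of $-C$.

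To close this gap the strategy is scalarization via Remark \ref{rem1}. Since $F(x^{*})-F(y)\in C\setminus\{0\}$ and $C^{*}$ separates points from $\partial C$, one can select $c^{*}\in G$ with $\langle c^{*},F(x^{*})-F(y)\rangle>0$; the convex scalar $g(\cdot)=\langle c^{*},F(\cdot)\rangle$ then has strictly negative directional derivative at $x^{*}$ along $y-x^{*}$. Combined with an interior perturbation $y_{t}=y+t(z-y)$ with $z\in\mathrm{int}(\Omega)$, which preserves the strict scalar drop while pushing the direction off $\partial(-C)$, this should upgrade the membership to $JF(x^{*})(y_{t}-x^{*})\in -\mathrm{int}(C)$ for small $t>0$, contradicting $C$-stationarity. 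I expect the bulk of the work to lie in making this open-cone promotion rigorous; a one-line application of the gradient inequality alone would only deliver the weak-efficiency version of the equivalence.
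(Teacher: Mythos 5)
Your forward direction (efficient $\Rightarrow$ stationary) is correct. But the difficulty you flag in the converse is not a technical obstacle to be overcome by a cleverer argument; it is a sign that the implication fails. As printed, the lemma is false, and no scalarization or interior perturbation can rescue it. Take $\Omega=\mathbb{R}$, $C=\mathbb{R}^{2}_{+}$ and the linear (hence $C$-convex) map $F(x)=(x,0)$. At $x^{*}=0$ one has $JF(0)(y-0)=(y,0)$, which never lies in $-\mathrm{int}(\mathbb{R}^{2}_{+})$, so $x^{*}$ is $C$-stationary; yet $F(-1)=(-1,0)\preceq_{C}F(0)$ with $F(-1)\neq F(0)$, so $x^{*}$ is not efficient. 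Your proposed repair breaks down exactly here: choosing $c^{*}\in G$ with $\langle c^{*},F(x^{*})-F(y)\rangle>0$ does give a strictly negative scalar directional derivative, but that only certifies $JF(x^{*})(y-x^{*})\in -C\setminus\{0\}$, and when the range of $JF(x^{*})$ is contained in a proper face of $C$ (as in the example, where the second row of $JF$ vanishes identically) no perturbation $y_{t}$ inside $\Omega$ can move $JF(x^{*})(y_{t}-x^{*})$ off the boundary of $-C$.

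The statement the paper actually needs --- and the one its own lead-in sentence announces (``the equivalence between $C$-stationary point and weakly efficient solution'') --- replaces ``efficient'' by ``weakly efficient''; the word ``efficient'' in the lemma is evidently a slip, and the paper supplies no proof, citing \cite{joh}. For the corrected statement, the ``one-line application of the gradient inequality'' that you set aside is precisely the whole argument: if $x^{*}$ is not weakly efficient, there is $y\in\Omega$ with $F(y)-F(x^{*})\in-\mathrm{int}(C)$, and $C$-convexity gives $JF(x^{*})(y-x^{*})\in F(y)-F(x^{*})-C\subseteq-\mathrm{int}(C)$, contradicting stationarity. Together with your forward direction --- which, as you essentially show, rules out even weak efficiency at any non-stationary point --- this closes the equivalence in its correct, weak-efficiency form.
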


\subsection{Relative smoothness and relative strong convexity}
\begin{defi}[Legendre function]{\rm\cite{roc}}
	Let $\omega:X\rightarrow \left(-\infty,+\infty \right]$ be a proper closed convex function, where $X\subset\mathbb{R}^{n}$. Then 
	\begin{itemize}
		\item[$\mathrm{(i)}$] $\omega(\cdot)$ is essentially smooth if $\omega(\cdot)$ is differentiable on $\mathrm{int (dom} \omega)$ and $\lim\limits_{k\rightarrow\infty}\|\nabla\omega(x^{k})\|\rightarrow\infty$ for all $\{x^{k}\}\subset\mathrm{int (dom} \omega)$ converges to some boundary point of $\mathrm{dom} \omega$;
		\item[$\mathrm{(ii)}$] $\omega(\cdot)$ is a Legendre function if $\omega(\cdot)$ is essentially smooth and strictly convex on $\mathrm{int (dom} \omega)$.
	\end{itemize}
\end{defi}
\begin{lem}\cite{roc}\label{lroc}
For a Legendre function $\omega(\cdot)$, we have the following useful properties.
\begin{itemize}
	\item[$\mathrm{(i)}$] $\omega(\cdot)$ is Legendre if and only if its conjugate $\omega^{*}$ is Legendre.
	\item[$\mathrm{(ii)}$] $\omega^{*}(\nabla\omega(x))=\langle x,\nabla\omega(x)\rangle-\omega(x)$.
	\item[$\mathrm{(iii)}$] $(\nabla\omega(\cdot))^{-1}=\nabla\omega^{*}(\cdot)$.  
	\item[$\mathrm{(iv)}$] $\mathrm{dom}\partial\omega=\mathrm{int(dom}\omega)$ with $\partial\omega(x)=\{\nabla\omega(x)\},\ \ \forall x\in\mathrm{int(dom}\omega)$.
\end{itemize}
\end{lem}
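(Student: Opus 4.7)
The statement collects four classical properties of Legendre functions attributed to Rockafellar; I would assemble them from two workhorses of convex analysis: the Fenchel--Young characterization of the subgradient, and biconjugacy $\omega^{**}=\omega$ for proper closed convex $\omega$. My plan would be to prove (iv) first, since the remaining three items cascade from it essentially formally.

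For (iv), the inclusion $\mathrm{int(dom}\omega)\subset\mathrm{dom}\partial\omega$ with $\partial\omega(x)=\{\nabla\omega(x)\}$ is the standard fact that a convex function's subdifferential at a point of differentiability is the singleton gradient; this applies because essential smoothness makes $\omega$ differentiable on $\mathrm{int(dom}\omega)$. The reverse inclusion is where essential smoothness does its real work: if $x$ lay on the relative boundary of $\mathrm{dom}\omega$ with $\partial\omega(x)\neq\emptyset$, I would approach $x$ by a sequence $x^k\to x$ from $\mathrm{int(dom}\omega)$ for which $\{\nabla\omega(x^k)\}$ admits a bounded subsequence, using upper semicontinuity of $\partial\omega$ on the interior of its domain; this would contradict the blow-up clause $\|\nabla\omega(x^k)\|\to\infty$.

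Item (ii) then drops out of (iv) together with the Fenchel--Young equality: for any proper closed convex $\omega$, $\omega(x)+\omega^{*}(x^{*})=\langle x,x^{*}\rangle$ iff $x^{*}\in\partial\omega(x)$, so substituting $x^{*}=\nabla\omega(x)$ at an interior point yields the identity at once. For (iii) and (i) I would argue in tandem, exploiting that Fenchel--Young is symmetric under biconjugacy: $x^{*}\in\partial\omega(x)$ iff $x\in\partial\omega^{*}(x^{*})$. Thus $\nabla\omega$ sends $\mathrm{int(dom}\omega)$ into $\mathrm{dom}\partial\omega^{*}$, and strict convexity of $\omega$ makes this map injective; applying the symmetric argument with the roles of $\omega$ and $\omega^{*}$ swapped shows that $\nabla\omega$ is a bijection onto $\mathrm{int(dom}\omega^{*})$ with inverse $\nabla\omega^{*}$, which is (iii). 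Finally, (i) follows by transferring each Legendre clause across the conjugation: strict convexity of $\omega$ forces differentiability of $\omega^{*}$ on $\mathrm{int(dom}\omega^{*})$ (two distinct subgradients of $\omega^{*}$ at one point would, by symmetry, provide a common gradient of $\omega$ at two different interior points), and essential smoothness of $\omega$ likewise forces strict convexity of $\omega^{*}$.

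The main obstacle I expect is exactly the second half of (iv)---ruling out subgradients on the relative boundary of $\mathrm{dom}\omega$. This is the only place where the gradient blow-up clause of the Legendre hypothesis is genuinely consumed, and the argument is delicate because it requires combining upper semicontinuity of $\partial\omega$ on its domain with the rate of divergence of $\|\nabla\omega(x^k)\|$. Once (iv) is in hand, the remaining items (ii), (iii), (i) are essentially rearrangements of Fenchel--Young and biconjugacy. Since all four assertions are classical, in the paper itself one would simply invoke \cite{roc} rather than unfold this chain.
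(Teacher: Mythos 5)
This lemma is not proved in the paper at all: it is quoted from Rockafellar \cite{roc} (it is the content of Theorems 23.5, 25.1, 26.1 and 26.5 there), so there is no internal argument to compare yours with, and your closing remark that one would simply invoke \cite{roc} is exactly what the authors do. Taken on its own terms, most of your outline is the standard one: (ii) is Fenchel--Young at a point of differentiability, and treating (i) and (iii) in tandem through the symmetry $x^{*}\in\partial\omega(x)\Leftrightarrow x\in\partial\omega^{*}(x^{*})$, with injectivity of $\nabla\omega$ coming from strict convexity, is how Theorem 26.5 is proved. One caution there: surjectivity of $\nabla\omega$ onto $\mathrm{int}(\mathrm{dom}\omega^{*})$ with inverse $\nabla\omega^{*}$ already uses item (iv) applied to $\omega^{*}$, i.e.\ essential smoothness of the conjugate, so the ``tandem'' must be ordered (strict convexity of $\omega$ on $\mathrm{int}(\mathrm{dom}\omega)$ plus (iv) gives essential strict convexity of $\omega$, hence essential smoothness of $\omega^{*}$, and only then the inverse-gradient identity); as written, your sketch risks circularity rather than containing an outright error.

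The genuine gap is in the step you yourself single out as delicate: excluding subgradients at boundary points in (iv). Upper semicontinuity, equivalently local boundedness, of $\partial\omega$ is an interior phenomenon --- it holds at points of $\mathrm{int}(\mathrm{dom}\omega)$ and fails precisely at boundary points of $\mathrm{dom}\omega$ --- so it cannot be invoked at the boundary point $x$ to produce a sequence $x^{k}\rightarrow x$ of interior points with bounded gradients; nothing in the hypotheses lets you control $\nabla\omega(x^{k})$ near $x$ this way, and bounding only the component of the gradients along a fixed segment into the interior (which convexity does give) is not enough, since the blow-up may occur in orthogonal directions. The correct classical tool is Rockafellar's representation of the subdifferential as the closed convex hull of limiting gradients plus the normal cone (Theorem 25.6 of \cite{roc}): if $\partial\omega(x)\neq\emptyset$ at a boundary point, that representation forces the existence of a sequence of differentiability points converging to $x$ whose gradients converge, hence stay bounded, contradicting the blow-up clause of essential smoothness; alternatively one follows Rockafellar's own proof of Theorem 26.1. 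With that substitution your plan for (iv), and hence the cascade (ii), (iii), (i), goes through.
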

Using the Legendre function $\omega(\cdot)$. the Bregman distance $D_{\omega}(\cdot,\cdot)$ w.r.t. $\omega(\cdot)$ is defined as
$$D_{\omega}(x,y)=\omega(x)-\omega(y)-\langle \nabla\omega(y),x-y\rangle,\ \forall(x,y)\in\mathrm{dom}(\omega)\times\mathrm{int(dom}\omega).$$
Since $\omega(\cdot)$ is strictly convex on $\mathrm{int (dom} \omega)$, we have $D_{\omega}(x,y)\geq0$ for all $x,y\in\mathrm{int (dom} \omega)$, and the equality holds if and only if $x=y$. In what follows, we recall a basic property for $D_{\omega}(\cdot,\cdot)$ that will be used in the sequel:
\begin{equation}\label{ed}
	D_{\omega}(x,y)=D_{\omega^{*}}(\nabla\omega(y),\nabla\omega(x)),\ \forall x,y\in\mathrm{int (dom} \omega).
\end{equation}

In general, $D_{\omega}(\cdot,\cdot)$ is not symmetric. The measure of symmetry for $D_{\omega}(\cdot,\cdot)$ was introduced in \cite{lc}. The symmetric coefficient is given by
$$\alpha(\omega):=\inf\left\{\frac{D_{\omega}(x,y)}{D_{\omega}(y,x)}:x,y\in\mathrm{int (dom} \omega),\ x\neq y\right\}\in[0,1].$$
\begin{rem}
	As noted in \cite{lc}, we call $D_{\omega}(\cdot,\cdot)$ symmetric if $\alpha(\omega)=1$. If $\alpha(\omega)=0$, then symmetry is absent for $D_{\omega}(\cdot,\cdot)$. The former happens when $\omega$ is a strictly convex quadratic function, which was deduced from \cite[Lemma 3.16]{sym}. As shown in \cite[Proposition 2]{lc}, if $\mathrm{dom}\omega$ is not open, then $\alpha(\omega)=0$.
\end{rem}

In the context of vector optimization, we propose the relative smoothness of $F(\cdot)$ relative to Legendre function $\omega(\cdot)$.
\begin{defi} For a vector $e\in\mathrm{int}C$, 
	we call $F(\cdot)$ is $(L,C,e)$-smooth relative to $\omega(\cdot)$ on $\Omega\subseteq\mathrm{dom}(\omega)$ if for all $x,y\in\mathrm{int}\Omega$, there exists $L>0$ such that
	$$F(y)\preceq_{C}F(x)+JF(x)(y-x)+LD_{\omega}(y,x)e.$$
\end{defi}

The following proposition presents some properties for relative smoothness.
\begin{prop}\label{rs}
	Assume $F(\cdot)$ is $(L,C,e)$-smooth relative to $\omega(\cdot)$ on $\Omega\subseteq\mathrm{dom}(\omega)$, then
	\begin{itemize}
		\item[$\mathrm{(i)}$] it is equivalent to $L\omega(\cdot)e-F(\cdot)$ is $C$-convex on $\Omega$;
		\item[$\mathrm{(ii)}$] for any $c\in\mathrm{int}C$, $F(\cdot)$ is $(\gamma L,C,c)$-smooth relative to $\omega(\cdot)$ on $\Omega$, where $\gamma=\max\limits_{c^{*}\in G}\left\{\frac{\langle c^{*},e\rangle}{\langle c^{*},c\rangle}\right\}$.
	\end{itemize}
\end{prop}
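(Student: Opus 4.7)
The plan is to reduce both parts to tools already set up in the preliminaries: the differentiable characterization of $C$-convexity $JG(x)(y-x)\preceq_{C}G(y)-G(x)$ for part~(i), and the dual description of $C$ via the test set $G$ (Remark~1) for part~(ii).

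For part~(i) I would introduce the auxiliary vector-valued map $H(x):=L\omega(x)e-F(x)$ on $\mathrm{int}\,\Omega$, whose Jacobian is $JH(x)=Le\,\nabla\omega(x)^{\top}-JF(x)$. Plugging this into the differentiable characterization of $C$-convexity for $H$ gives
$$Le\langle\nabla\omega(x),y-x\rangle-JF(x)(y-x)\preceq_{C} L(\omega(y)-\omega(x))e-F(y)+F(x),$$
and rearranging while using the very definition of $D_{\omega}(y,x)=\omega(y)-\omega(x)-\langle\nabla\omega(x),y-x\rangle$ recovers the relative smoothness inequality exactly. Every step is reversible, so this establishes the equivalence.

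For part~(ii) I start from the $(L,C,e)$-smoothness inequality and note that it suffices to show $LD_{\omega}(y,x)\,e\preceq_{C}\gamma L D_{\omega}(y,x)\,c$, which, since $D_{\omega}(y,x)\ge 0$, reduces to the pointwise cone inclusion $\gamma c-e\in C$. By Remark~1 (equivalently the definition of $C^{*}$ together with the fact that every non-zero element of $C^{*}$ is a positive multiple of some element of $G$), a vector lies in $C$ iff it pairs non-negatively with every $c^{*}\in G$. Thus $\gamma c-e\in C$ is equivalent to $\gamma\langle c^{*},c\rangle\ge\langle c^{*},e\rangle$ for all $c^{*}\in G$, and the prescribed $\gamma=\max_{c^{*}\in G}\langle c^{*},e\rangle/\langle c^{*},c\rangle$ is exactly the smallest constant making this hold.

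The one step requiring care is to justify that this maximum is well-defined and finite. I would argue that $G$ is the convex hull of the compact set $\{c^{*}\in C^{*}:\|c^{*}\|=1\}$, hence compact in $\mathbb{R}^{m}$; pointedness of $C^{*}$ (inherited because $C$ is solid) rules out a nontrivial convex combination of unit elements of $C^{*}$ equalling the origin, so $0\notin G$. Since $c\in\mathrm{int}\,C$, we then have $\langle c^{*},c\rangle>0$ for every $c^{*}\in G$, the map $c^{*}\mapsto\langle c^{*},e\rangle/\langle c^{*},c\rangle$ is continuous on a compact set, and its maximum is attained and positive. This compactness/pointedness bookkeeping is really the only subtle point; the rest is direct rewriting.
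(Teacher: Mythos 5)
Your proof is correct, and it earns a comparison because part (ii) takes a genuinely different route from the paper. For part (i) the content is essentially the paper's: the paper scalarizes through $G$ (Remark \ref{rem1}) and applies the scalar gradient inequality to $\langle c^{*},L\omega(\cdot)e-F(\cdot)\rangle$ for every $c^{*}\in G$, whereas you apply the vector-valued gradient characterization $JH(x)(y-x)\preceq_{C}H(y)-H(x)$ directly to $H=L\omega(\cdot)e-F(\cdot)$; the two computations coincide up to the dual description of $\preceq_{C}$, and both share the same mild looseness about $\mathrm{int}(\Omega)$ versus $\Omega$. For part (ii) the paper stays at the scalarized level: it writes $\langle c^{*},\bar{L}\omega(\cdot)c-F(\cdot)\rangle$ as $\langle c^{*},L\omega(\cdot)e-F(\cdot)\rangle$ plus the nonnegative multiple $(\bar{L}\langle c^{*},c\rangle-L\langle c^{*},e\rangle)\omega(\cdot)$ of the convex function $\omega$, keeps convexity for every $c^{*}\in G$, and converts back via part (i). You instead argue at the vector level, reducing everything to the single cone inclusion $\gamma c-e\in C$, verified through the dual description of the closed convex cone $C$ (note this is really the bipolar theorem plus the fact that $G$ conically generates $C^{*}$, not Remark \ref{rem1}, which concerns $C$-convexity), and then add $LD_{\omega}(y,x)(\gamma c-e)\in C$ to the smoothness inequality using $D_{\omega}\geq0$. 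Your route does not use part (i) at all and makes the meaning of $\gamma$ transparent as the least constant with $\gamma c-e\in C$; the paper's route avoids invoking $C=C^{**}$ explicitly and recycles its scalarization machinery. Your bookkeeping on the well-definedness of $\gamma$ (compactness of $G$, $0\notin G$ from pointedness of $C^{*}$, hence $\langle c^{*},c\rangle>0$ for $c\in\mathrm{int}(C)$) is sound and somewhat more detailed than the paper, which asserts well-definedness from $c\in\mathrm{int}(C)$ and compactness of $G$ and records $0\notin G$ only later, just before (\ref{min}).
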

\begin{proof}
(i) From Remark \ref{rem1}, $C$-convexity of $L\omega(\cdot)e-F(\cdot)$ on $\Omega$ is equivalent to $\langle c^{*}, L\omega(\cdot)e-F(\cdot)\rangle$ is convex on $\Omega$ for all $c^{*}\in G$. Then, we have
$$\langle c^{*}, LD_{\omega}(y,x)e-F(y)+F(x)+JF(x)(y-x)\rangle\geq0,\ \forall x,y\in\mathrm{int}(\Omega),\ c^{*}\in G.$$
This is equivalent to $F(y)\preceq_{C}F(x)+JF(x)(y-x)+LD_{\omega}(y,x)e$, assertion (i) is proved.
\par (ii) From assertion (i), we obtain $\langle c^{*}, L\omega(\cdot)e-F(\cdot)\rangle$ is convex on $\Omega$ for all $c^{*}\in G$. Since $\omega(\cdot)$ is convex, for any $c\in\mathrm{int}C$, it follows that $\langle c^{*}, \bar{L}\omega(\cdot)c-F(\cdot)\rangle$ is convex on $\Omega$ for all $c^{*}\in G$ when $\bar{L}\langle c^{*}, c\rangle\geq L\langle c^{*}, e\rangle$ holds for all $c^{*}\in G$. The inequality holds by setting $\bar{L}=\max\limits_{c^{*}\in G}\left\{\frac{\langle c^{*},e\rangle}{\langle c^{*},c\rangle}\right\}L$, and $\max\limits_{c^{*}\in G}\left\{\frac{\langle c^{*},e\rangle}{\langle c^{*},c\rangle}\right\}$ is well-defined due to $c\in\mathrm{int}C$ and the compactness of $G$.$\ \ \square$
\end{proof}

We also propose the relative strong convex of $F(\cdot)$ relative to Legendre function $\omega(\cdot)$.
\begin{defi}
	For a vector $e\in\mathrm{int}C$, 
	we call $F(\cdot)$ is $(\mu,C,e)$-strongly convex relative to $\omega(\cdot)$ on $\Omega\subseteq\mathrm{dom}(\omega)$ if for all $x,y\in\mathrm{int}\Omega$, there exists $\mu\geq0$ such that
	$$F(x)+JF(x)(y-x)+\mu D_{\omega}(y,x)e\preceq_{C}F(y).$$
\end{defi}

Similarly, we present some properties for relative strong convexity as follows. 
\begin{prop}
	Assume $F(\cdot)$ is $(\mu,C,e)$-strongly convex relative to $\omega(\cdot)$ on $\Omega\subseteq\mathrm{dom}(\omega)$, then
	\begin{itemize}
		\item[$\mathrm{(i)}$] it is equivalent to $F(\cdot)-\mu\omega(\cdot)e$ is $C$-convex on $\Omega$;
		\item[$\mathrm{(ii)}$] for any $c\in\mathrm{int}C$, $F(\cdot)$ is $(\rho\mu,C,c)$-strongly convex relative to $\omega(\cdot)$ on $\Omega$, where $\rho=\min\limits_{c^{*}\in G}\left\{\frac{\langle c^{*},e\rangle}{\langle c^{*},c\rangle}\right\}$.
	\end{itemize}
\end{prop}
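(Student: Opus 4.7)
The plan is to mirror the argument used for Proposition \ref{rs} step by step, reversing the direction of the relevant inequalities so that a lower bound replaces the upper bound.

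For assertion (i), I would invoke Remark \ref{rem1} to reduce $C$-convexity of $F(\cdot)-\mu\omega(\cdot)e$ on $\Omega$ to scalar convexity of $\langle c^{*},F(\cdot)\rangle-\mu\langle c^{*},e\rangle\omega(\cdot)$ on $\Omega$ for every $c^{*}\in G$. Since $F$ is differentiable by hypothesis and $\omega$ is differentiable on $\mathrm{int(dom}\,\omega)\supseteq\mathrm{int}\,\Omega$, this scalar convexity is equivalent to the gradient inequality
$$\langle c^{*},F(y)-F(x)-JF(x)(y-x)-\mu D_{\omega}(y,x)e\rangle\geq 0,\quad\forall x,y\in\mathrm{int}\,\Omega,\ c^{*}\in G,$$
which, by Remark \ref{rem1} applied in reverse, is exactly $F(x)+JF(x)(y-x)+\mu D_{\omega}(y,x)e\preceq_{C}F(y)$.

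For assertion (ii), I would first check that $\rho$ is well-defined and positive: $G$ is a compact subset of $C^{*}\setminus\{0\}$, and $c\in\mathrm{int}\,C$ forces $\langle c^{*},c\rangle>0$ for every $c^{*}\in G$, so $c^{*}\mapsto\langle c^{*},e\rangle/\langle c^{*},c\rangle$ is continuous and positive on $G$ and attains its minimum $\rho>0$. Using assertion (i), I would start from convexity of $\langle c^{*},F(\cdot)\rangle-\mu\langle c^{*},e\rangle\omega(\cdot)$ and split
$$\langle c^{*},F(\cdot)-\rho\mu\omega(\cdot)c\rangle=\bigl[\langle c^{*},F(\cdot)\rangle-\mu\langle c^{*},e\rangle\omega(\cdot)\bigr]+\mu\bigl[\langle c^{*},e\rangle-\rho\langle c^{*},c\rangle\bigr]\omega(\cdot).$$
By the choice of $\rho$, the coefficient in the second bracket is nonnegative, so that term is a nonnegative multiple of the convex function $\omega(\cdot)$. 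Hence $\langle c^{*},F(\cdot)-\rho\mu\omega(\cdot)c\rangle$ is convex for every $c^{*}\in G$; Remark \ref{rem1} then gives $C$-convexity of $F(\cdot)-\rho\mu\omega(\cdot)c$ on $\Omega$, and a second application of (i) delivers $(\rho\mu,C,c)$-strong convexity.

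There is no real technical obstacle; the only delicate point is the direction of the scaling. In Proposition \ref{rs}, $\gamma$ must be a \emph{maximum} so that an upper bound survives when the reference direction is switched from $e$ to $c$; here, by contrast, $\rho$ must be a \emph{minimum} so that $\mu$ can only be deflated, reflecting the fact that a \emph{lower} bound with $e$ is weakened, not strengthened, when re-expressed in the direction $c$.
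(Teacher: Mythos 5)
Your proposal is correct and is essentially the argument the paper intends: the paper omits this proof as ``similar to Proposition \ref{rs},'' and your reduction via Remark \ref{rem1} to scalar convexity of $\langle c^{*},F(\cdot)\rangle-\mu\langle c^{*},e\rangle\omega(\cdot)$, followed by adding the nonnegative multiple $\mu\bigl[\langle c^{*},e\rangle-\rho\langle c^{*},c\rangle\bigr]\omega(\cdot)$ with $\rho$ taken as a minimum (rather than the maximum used for $\gamma$), is exactly that analogue. Your closing remark on why the scaling direction flips is accurate and matches the roles of $\gamma$ and $\rho$ in the two propositions.
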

\begin{proof}
	The proof is similar to the arguments used in Proposition \ref{rs}, we omit it here.$\ \ \square$
	\end{proof}

\begin{rem}
	When $\omega(\cdot)=\frac{1}{2}\|\cdot\|^{2}$, the $(L,C,e)$-smoothness reduces to condition (A) in \cite{cw}. If $C=\mathbb{R}_{+}$, the $(L,C,e)$-smoothness and $(\mu,C,e)$-strong convexity correspond to relative $L$-smoothness and $\mu$-strong convexity in \cite{rs}, respectively.
\end{rem}

From the above two propositions, the constants of relative smoothness and relative strong convexity depend on $\omega(\cdot)$, $F(\cdot)$ and $e$. Recall that the quotient $\frac{L}{\mu}$ plays a key role in the geometric convergence of first-order methods in the presence of strong convexity. If $\frac{L}{\mu}$ is large, the function descreases slowly when first-order methods are applied. We call the function is ill-conditioned in the situation. To alleviate this dilemma, for the general $\omega(\cdot)$ and $F(\cdot)$, the vector $e$ is selected as:
\begin{equation}\label{e}
	e:=\mathop{\arg\max}\limits_{c\in B[0,r]}\min\limits_{c^{*}\in G}\langle c^{*},c\rangle.
\end{equation}
	When $C=\mathbb{R}_{+}^{m}$, the vector $e$ defined as (\ref{e}) corresponds to $(\frac{r}{\sqrt{m}},...,\frac{r}{\sqrt{m}})$. For simplicity, we select a suitable $r>0$ such that 
	\begin{equation}\label{max}
		\max\limits_{c^{*}\in G}\langle c^{*},e\rangle=1.
	\end{equation}
On the other hand, since $C^{*}\subset\mathbb{R}^{m}$ is a closed, convex, and pointed cone, we have $0\notin G$. Then we defined
	\begin{equation}\label{min}
		\delta:=\min\limits_{c^{*}\in G}\langle c^{*},e\rangle>0.
	\end{equation}
\section{ Merit Functions for VOPs}
A function is called merit function for (\ref{VOP}) if
it returns $0$ at the solutions of (\ref{VOP}) and strictly positive values otherwise. In this section, we first present the following two merit functions. 
\begin{equation}\label{u}
	u_{0}(x):=\sup\limits_{y\in\Omega}\min\limits_{c^{*}\in G}\langle c^{*},F(x)-F(y)\rangle,
\end{equation}

\begin{equation}\label{v}
	v_{\ell}(x):=\sup\limits_{y\in\Omega}\min\limits_{c^{*}\in G}\{\langle c^{*},JF(x)(x-y)\rangle-\ell D_{\omega}(y,x)\},
\end{equation}
where $\ell>0$ is a constant and $\mathrm{cl}(\mathrm{dom}\omega)=\Omega$. 
\subsection{Properties of merit functions}
\par We can show that $u_{0}(\cdot)$ and $v_{\ell}(\cdot)$ are merit functions in the sense of weak efficiency and stationarity, respectively.
\begin{theo}\label{t1}
	Let $u_{0}(\cdot)$ and $v_{\ell}(\cdot)$ be defined as (\ref{u}) and (\ref{v}), respectively. Then
	\begin{itemize}
		\item[$\mathrm{(i)}$] the point $x\in\Omega$ is a weakly efficient solution of (\ref{VOP}) if and only if $u_{0}(x)=0$;
		\item[$\mathrm{(ii)}$] the point $x\in\mathrm{int}(\Omega)$ is $C$-stationary point of (\ref{VOP}) if and only if $v_{\ell}(x)=0$;
	\end{itemize}
\end{theo}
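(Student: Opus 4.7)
The plan is to reduce both equivalences to the scalar dual characterization
\begin{equation*}
z \in \mathrm{int}(C) \ \Longleftrightarrow\ \langle c^{*}, z\rangle > 0 \ \text{ for all } c^{*} \in G,
\end{equation*}
which holds because $G$ is compact, $0 \notin G$, and every nonzero element of $C^{*}$ is a positive multiple of an element of $G$ (all three using that $C^{*}$ is pointed with non-empty interior, dual to the assumed properties of $C$). Combined with the obvious lower bound $u_{0}(x), v_{\ell}(x) \geq 0$ obtained by setting $y = x$ in (\ref{u}) and (\ref{v}), each equivalence reduces to a short case analysis on whether the outer supremum is strictly positive.

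For (i) I would argue both directions as contrapositives. If $x$ is weakly efficient, then for every $y \in \Omega$ the vector $F(x) - F(y)$ lies outside $\mathrm{int}(C)$, so by the dual characterization some $c^{*} \in G$ satisfies $\langle c^{*}, F(x) - F(y)\rangle \leq 0$; hence the inner minimum in (\ref{u}) is $\leq 0$ for every $y$, giving $u_{0}(x) \leq 0$ and therefore $u_{0}(x) = 0$. Conversely, if $x$ is not weakly efficient, pick $y \in \Omega$ with $F(y) \prec_{C} F(x)$; then $\langle c^{*}, F(x) - F(y)\rangle > 0$ for every $c^{*} \in G$, and compactness of $G$ upgrades this pointwise positivity to a strictly positive minimum, so $u_{0}(x) > 0$.

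Part (ii) follows the same template, with one extra subtlety in the harder direction. If $x$ is $C$-stationary, then Definition~\ref{st} gives $JF(x)(x - y) \notin \mathrm{int}(C)$ for every $y \in \Omega$, so some $c^{*} \in G$ forces $\langle c^{*}, JF(x)(x - y)\rangle \leq 0$; subtracting the non-negative quantity $\ell D_{\omega}(y, x)$ preserves the sign, yielding $v_{\ell}(x) \leq 0$ and hence $v_{\ell}(x) = 0$. For the converse, assume $x \in \mathrm{int}(\Omega)$ is not $C$-stationary; by Remark~\ref{rdd} I can pick $y \in \mathrm{int}(\Omega)$ with $JF(x)(y - x) \in -\mathrm{int}(C)$, and consider the rescaled point $y_{t} := x + t(y - x)$ for small $t \in (0,1]$. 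The linear term equals $t \langle c^{*}, JF(x)(x - y)\rangle$, while differentiability of $\omega$ at $x$ (which lies in $\mathrm{int}(\mathrm{dom}\,\omega) = \mathrm{int}(\Omega)$ by essential smoothness and $\mathrm{cl}(\mathrm{dom}\,\omega) = \Omega$) gives
\begin{equation*}
\frac{D_{\omega}(y_{t}, x)}{t} = \frac{\omega(x + t(y-x)) - \omega(x)}{t} - \langle \nabla\omega(x), y - x\rangle \ \longrightarrow\ 0 \ \text{ as } t \downarrow 0.
\end{equation*}
Setting $\sigma := \min_{c^{*} \in G}\langle c^{*}, JF(x)(x - y)\rangle > 0$ (compactness of $G$ together with $JF(x)(x - y) \in \mathrm{int}(C)$), for sufficiently small $t$ one has $t\sigma - \ell D_{\omega}(y_{t}, x) > 0$ uniformly in $c^{*} \in G$, so $v_{\ell}(x) > 0$. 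The main obstacle is precisely this last step: a priori the Bregman penalty could absorb the descent direction and leave $v_{\ell}(x) = 0$ at a non-stationary point, and the remedy is the shrinking-step construction coupled with the first-order vanishing $D_{\omega}(y_{t}, x) = o(t)$ guaranteed by essential smoothness of the Legendre function $\omega$; the remainder is routine scalarization of cone inclusions through the compact generator $G$.
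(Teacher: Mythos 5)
Your proposal is correct and follows essentially the same route as the paper: the lower bound $u_{0}(x),v_{\ell}(x)\geq 0$ via $y=x$, scalarization of the cone conditions through the compact generator $G$ for the upper bound, and for the converse of (ii) the same shrinking-step argument along $x+t(y_{0}-x)$ using $D_{\omega}(x+t(y_{0}-x),x)=o(t)$ from differentiability of $\omega$ (the paper phrases this as a contradiction, you as a contrapositive, which is immaterial). No gaps worth flagging.
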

\begin{proof}
	\par (i) Since  $x$ is a weakly efficient solution of (\ref{VOP}), we have
	$$F(x)-F(y)\notin \mathrm{int}(C),\ \forall y\in\Omega.$$
	Then $$\min\limits_{c^{*}\in G}\langle c^{*},F(x)-F(y)\rangle\leq0,\ \forall y\in\Omega,$$
	which is equivalent to $$\sup\limits_{y\in\Omega}\min\limits_{c^{*}\in G}\langle c^{*},F(x)-F(y)\rangle\leq0.$$
	On the other hand, the definition of $u_{0}(\cdot)$ implies that
	$$u_{0}(x)\geq\min\limits_{c^{*}\in G}\{\langle c^{*},F(x)-F(x)\rangle\}=0,$$
	which is equivalent to $u_{0}(x)=0$. 
	\par (ii) Since  $x$ is $C$-stationary point of (\ref{VOP}), we have
	$$\sup\limits_{y\in\Omega}\min\limits_{c^{*}\in G}\langle c^{*},JF(x)(x-y)\rangle\leq0,$$
	which, along the fact that $D_{\omega}(y,x)\geq0$, yields
	$$	v_{\ell}(x)=\sup\limits_{y\in\Omega}\min\limits_{c^{*}\in G}\{\langle c^{*},JF(x)(x-y)\rangle-\ell D_{\omega}(y,x)\}\leq0.$$
	On the other hand, the definition of $v_{\ell}$ implies that
	$$v_{\ell}(x)\geq\min\limits_{c^{*}\in G}\{\langle c^{*},JF(x)(x-x)\rangle-\ell D_{\omega}(x,x)\}=0.$$
	Hence, $v_{\ell}(x)=0$. 
\par Conversely, assume that $x$ is not $C$-stationary when $v_{\ell}(x)=0$. Then, there exists $y_{0}\in\Omega$ such that 
	$$\min\limits_{c^{*}\in G}\langle c^{*},JF(x)(x-y_{0})\rangle>0.$$
	For all $t\in[0,1]$, $x+t(y_{0}-x)\in\Omega$. Hence,
	\begin{align*}
		v_{\ell}(x)&=\sup\limits_{y\in\Omega}\min\limits_{c^{*}\in G}\{\langle c^{*},JF(x)(x-y)\rangle-\ell D_{\omega}(y,x)\}\\
		&\geq\min\limits_{c^{*}\in G}\{\langle c^{*},tJF(x)(x-y_{0})\rangle-\ell D_{\omega}(x+t(y_{0}-x),x)\}.
	\end{align*}
On the other hand, the differentiability of $\omega(\cdot)$ implies $\lim\limits_{t\rightarrow0}\frac{D_{\omega}(x+t(y_{0}-x),x)}{t}=0$. This together with the above inequality and $\min\limits_{c^{*}\in G}\langle c^{*},JF(x)(x-y_{0})\rangle>0$ yield $v_{\ell}(x)>0$. This contradicts $v_{\ell}(x)=0$. The proof is completed.$\ \ \square$
\end{proof}

We denote by $V_{\ell}(\cdot)$ the optimal solution of $v_{\ell}(\cdot)$. Hence,
\begin{equation}\label{V}
	V_{\ell}(x):=\mathop{\arg\sup}\limits_{y\in\Omega}\min\limits_{c^{*}\in G}\{\langle c^{*},JF(x)(x-y)\rangle-\ell D_{\omega}(y,x)\}.
\end{equation}
In the sequel, we systematically assume that $V_{\ell}(\cdot)$ is well-defined on $\mathrm{int(dom}\omega)$, i.e., $V_{\ell}(\cdot)$ is a single valued mapping from $\mathrm{int(dom}\omega)$ to $\mathrm{int(dom}\omega)$. We give a sufficient condition for the assumption.
 \begin{lem}\label{net}
 	If $\omega(\cdot)$ is supercoercive, i.e., $$\lim\limits_{\|x\|\rightarrow\infty}\frac{\omega(x)}{\|x\|}=\infty,$$
 	then $V_{\ell}(\cdot)$ is a single valued mapping from $\mathrm{int(dom}\omega)$ to $\mathrm{int(dom}\omega)$.
 \end{lem}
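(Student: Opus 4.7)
The plan is to establish, for each $x\in\mathrm{int}(\mathrm{dom}\,\omega)$, three facts about the maximization defining $V_{\ell}(x)$: the supremum is attained, the maximizer is unique, and it lies in $\mathrm{int}(\mathrm{dom}\,\omega)$. Throughout, write $h(y,c^{*}):=\langle c^{*},JF(x)(x-y)\rangle-\ell D_{\omega}(y,x)$, $\phi(y):=\min_{c^{*}\in G}h(y,c^{*})$, and $\psi(c^{*}):=\sup_{y\in\Omega}h(y,c^{*})$, so that $v_{\ell}(x)=\sup_{y\in\Omega}\phi(y)$ and $V_{\ell}(x)=\mathop{\arg\max}\limits_{y\in\Omega}\phi(y)$.

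For existence, I first note that $\omega$ lsc makes $D_{\omega}(\cdot,x)$ lsc, hence $h(\cdot,c^{*})$ is upper semicontinuous on $\Omega$ for each $c^{*}$; compactness of $G$ then makes $\phi$ upper semicontinuous as well. Fixing any $c_{0}^{*}\in G$, the bound $\phi(y)\leq h(y,c_{0}^{*})$ combined with the linear growth of $\langle c_{0}^{*},JF(x)(x-y)\rangle$ and the superlinear growth of $\ell D_{\omega}(y,x)$ (inherited from supercoercivity of $\omega$) forces $\phi(y)\to -\infty$ as $\|y\|\to\infty$. Weierstrass on a sufficiently large closed sublevel set of $-\phi$ then produces some $y^{**}\in V_{\ell}(x)$.

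The crux is placing $y^{**}$ in $\mathrm{int}(\mathrm{dom}\,\omega)$, handled by Fenchel conjugacy and a saddle-point argument. For each fixed $c^{*}\in G$, rewrite $h(y,c^{*})=\langle\ell\nabla\omega(x)-JF(x)^{T}c^{*},y\rangle-\ell\omega(y)+K(x,c^{*})$ with $K$ independent of $y$; enlarging the sup from $\Omega$ to $\mathbb{R}^{n}$ is harmless because $\omega\equiv +\infty$ off $\mathrm{dom}\,\omega\subseteq\Omega$. Thus $\psi(c^{*})=\ell\omega^{*}\bigl(\nabla\omega(x)-\ell^{-1}JF(x)^{T}c^{*}\bigr)+K(x,c^{*})$. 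Supercoercivity of $\omega$ forces $\mathrm{dom}\,\omega^{*}=\mathbb{R}^{n}$ (a standard Legendre-conjugacy fact), Lemma~\ref{lroc}(i) makes $\omega^{*}$ Legendre, and Lemma~\ref{lroc}(iv) applied to $\omega^{*}$ gives differentiability on $\mathbb{R}^{n}$. Consequently the sup defining $\psi(c^{*})$ is attained at the single point
\begin{equation*}
y^{*}(c^{*}):=\nabla\omega^{*}\!\bigl(\nabla\omega(x)-\ell^{-1}JF(x)^{T}c^{*}\bigr),
\end{equation*}
and by Lemma~\ref{lroc}(iii)--(iv) the range of $\nabla\omega^{*}=(\nabla\omega)^{-1}$ is $\mathrm{int}(\mathrm{dom}\,\omega)$, so $y^{*}(c^{*})\in\mathrm{int}(\mathrm{dom}\,\omega)$. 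Now Sion's minimax theorem applies ($h$ is linear--continuous in $c^{*}$ on convex--compact $G$ and concave--usc in $y$ on convex $\Omega$), giving $v_{\ell}(x)=\min_{c^{*}\in G}\psi(c^{*})$; pick $\tilde c^{*}\in\mathop{\arg\min}\limits_{c^{*}\in G}\psi(c^{*})$, which exists since $\psi$ is continuous on the compact $G$. The chain
\begin{equation*}
v_{\ell}(x) = \phi(y^{**}) \leq h(y^{**},\tilde c^{*}) \leq \sup_{y}h(y,\tilde c^{*}) = \psi(\tilde c^{*}) = v_{\ell}(x)
\end{equation*}
forces equality throughout, and in particular $y^{**}\in\mathop{\arg\max}\limits_{y\in\Omega}h(y,\tilde c^{*})=\{y^{*}(\tilde c^{*})\}\subset\mathrm{int}(\mathrm{dom}\,\omega)$.

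Uniqueness closes the argument. If distinct $y_{1}^{**},y_{2}^{**}\in V_{\ell}(x)$ existed, by the previous step both lie in $\mathrm{int}(\mathrm{dom}\,\omega)$, and so does their midpoint $y_{m}$. Strict convexity of $\omega$ on $\mathrm{int}(\mathrm{dom}\,\omega)$ yields $h(y_{m},c^{*})>\tfrac{1}{2}h(y_{1}^{**},c^{*})+\tfrac{1}{2}h(y_{2}^{**},c^{*})$ for every $c^{*}\in G$; evaluating at any $c_{m}^{*}\in\mathop{\arg\min}\limits_{c^{*}\in G}h(y_{m},c^{*})$ and using $h(y_{i}^{**},c_{m}^{*})\geq\phi(y_{i}^{**})=v_{\ell}(x)$ produces $\phi(y_{m})>v_{\ell}(x)$, a contradiction. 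Hence $V_{\ell}$ is single-valued from $\mathrm{int}(\mathrm{dom}\,\omega)$ to $\mathrm{int}(\mathrm{dom}\,\omega)$. The main obstacle is confining $y^{**}$ to the interior: a direct first-order argument at $\partial(\mathrm{dom}\,\omega)$ is delicate because essential smoothness manifests through subgradient blowup rather than through a clean local bound, so I route around it via the Fenchel-conjugate reformulation, which together with Sion's theorem identifies $y^{**}$ with the explicit interior point $y^{*}(\tilde c^{*})$.
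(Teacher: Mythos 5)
Your proof is correct, and it splits into a part that mirrors the paper and a part that takes a genuinely different route. The existence and uniqueness arguments coincide with the paper's: there too, supercoercivity of $\omega(\cdot)$ is used to show the objective tends to $-\infty$ as $\|u\|\rightarrow\infty$ (so it is level bounded), Weierstrass gives attainment, and strict convexity of $\omega(\cdot)$ gives uniqueness. Where you diverge is the interior-placement step. The paper handles it in two lines: the first-order optimality condition at the maximizer forces $\partial\omega(V_{\ell}(x))\neq\emptyset$, and Lemma \ref{lroc}(iv) ($\mathrm{dom}\,\partial\omega=\mathrm{int(dom}\omega)$) immediately puts $V_{\ell}(x)$ in the interior. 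You instead pass to the Fenchel conjugate, invoke the (standard, though unstated in the paper) equivalence between supercoercivity of $\omega$ and $\mathrm{dom}\,\omega^{*}=\mathbb{R}^{n}$, and apply Sion's minimax theorem to identify any maximizer with the explicit point $\nabla\omega^{*}\bigl(\nabla\omega(x)-\ell^{-1}JF(x)^{T}\tilde{c}^{*}\bigr)$, whose range is $\mathrm{int(dom}\omega)$ by Lemma \ref{lroc}(iii)--(iv). In effect you re-derive Proposition \ref{p1}(i)--(ii) inside the lemma: this is heavier machinery than the paper needs at this point, but it buys you the closed-form expression for $V_{\ell}(x)$ and, as a by-product, uniqueness for free (any maximizer equals $y^{*}(\tilde{c}^{*})$), which makes your final midpoint/strict-convexity paragraph redundant, though correct. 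Both arguments ultimately rest on the same Legendre facts, so the difference is one of economy versus explicitness rather than of substance.
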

\begin{proof}
For any fixed point $x\in\mathrm{int(dom}\omega)$, and $\ell>0$. We define the function $P_{\ell}(\cdot)$ for $u$ as follows
\begin{equation}\label{p}
	P_{\ell}(u)=\min\limits_{c^{*}\in G}\{\langle c^{*},JF(x)(x-u)\rangle-\ell D_{\omega}(u,x)\}.
\end{equation}
Then $V_{\ell}(x)=\mathop{\arg\sup}\limits_{u\in\Omega}P_{\ell}(u)$. Substituting the equality of Bregman distance into equality (\ref{p}), we have
\begin{align*}
	P_{\ell}(u)&=\min\limits_{c^{*}\in G}\{\langle c^{*},JF(x)(x-u)\rangle-\ell(\omega(u)-\omega(x)-\nabla\omega(x)(u-x))\}\\
	&=\|u\|\left(\frac{\min\limits_{c^{*}\in G}\{\langle c^{*},JF(x)(x-u)\rangle\}}{\|u\|}-\frac{\ell\omega(u)}{\|u\|}+\frac{\ell(\omega(x)+\nabla\omega(x)(u-x))}{\|u\|}\right).
\end{align*}	
Taking $\|u\|\rightarrow\infty$, then $\frac{\min\limits_{c^{*}\in G}\{\langle c^{*},JF(x)(x-u)\rangle\}}{\|u\|}$ is bounded due to the compactness of $G$. This combines with the boundness of $\frac{\ell(\omega(x)+\nabla\omega(x)(u-x))}{\|u\|}$ and supercoercivity of $\omega(\cdot)$ implies $\lim\limits_{\|u\|\rightarrow\infty}P_{\ell}(u)=-\infty$. Hence, $P_{\ell}(\cdot)$ is level bounded on $\mathbb{R}^{n}$. It follows from continuity of $P_{\ell}(\cdot)$ and Weierstrass's theorem \cite[Theorem 1.9]{rocv} that $V_{\ell}(x)$ is nonempty. The uniqueness of $V_{\ell}(x)$ is given by the strict convexity of $\omega(\cdot)$. From the optimality condition of $V_{\ell}(x)$, it follows that $\partial\omega(V_{\ell}(x))$ is nonempty. Then Lemma \ref{lroc}(iv) implies that $V_{\ell}(x)\in\mathrm{int(dom}\omega)$.$\ \ \square$
\end{proof}
\begin{rem}
	Recall that $\mathrm{cl}(\mathrm{dom}\omega)=\Omega$, we have $\mathrm{int(dom}\omega)=\mathrm{int}(\Omega)$. From Lemma \ref{net}, if $\omega(\cdot)$ is supercoercive, then $V_{\ell}(\cdot)$ is a single valued mapping from $\mathrm{int}(\Omega)$ to $\mathrm{int}(\Omega)$. 
\end{rem}

\begin{prop}\label{p1}
	For all $\ell>0$, let $v_{\ell}(\cdot)$ and $V_{\ell}(\cdot)$ be defined as (\ref{v}) and (\ref{V}), respectively. Then
	\begin{itemize}
		\item[$\mathrm{(i)}$] there exists $c_{\ell}(x)\in G$, such that 
		\begin{equation}\label{e3}
			V_{\ell}(x)=\mathop{\arg\sup}\limits_{y\in\Omega}\{\langle c_{\ell}(x),JF(x)(x-y)\rangle-\ell D_{\omega}(y,x)\},	
		\end{equation}
	    and
	    \begin{equation}\label{ecl}
	    	c_{\ell}(x)\in\mathop{\arg\min}\limits_{c^{*}\in G}\langle c^{*},JF(x)(x-V_{\ell}(x))\rangle;
	    \end{equation}
		\item[$\mathrm{(ii)}$] $V_{\ell}(x)=\nabla\omega^{*}(\nabla\omega(x)-\frac{1}{\ell}JF(x)^{T}c_{\ell}(x)),\ \forall x\in\mathrm{int}(\Omega);$
		\item[$\mathrm{(iii)}$] $\langle c_{\ell}(x),JF(x)(x-V_{\ell}(x))\rangle=\ell(D_{\omega}(x,V_{\ell}(x))+D_{\omega}(V_{\ell}(x),x)),\ \forall x\in\mathrm{int}(\Omega);$
		\item[$\mathrm{(iv)}$] $v_{\ell}(x)=\ell D_{\omega}(x,V_{\ell}(x)),\ \forall x\in\mathrm{int}(\Omega)$.
	\end{itemize} 
\end{prop}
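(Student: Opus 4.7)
The plan is to establish (i) by a saddle-point argument, then derive (ii)--(iv) as consequences of the resulting first-order optimality condition combined with standard Bregman identities.

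For part (i), I would invoke Sion's minimax theorem to the function $\varphi(y,c^{\ast}) := \langle c^{\ast}, JF(x)(x-y)\rangle - \ell D_{\omega}(y,x)$ on $\Omega \times G$. The set $G$ is convex and compact by its construction as a closed convex hull of a bounded set, $\varphi$ is linear (hence concave and continuous) in $c^{\ast}$, and it is concave and upper-semicontinuous in $y$ since $-\ell D_{\omega}(\cdot,x)$ is concave (strictly, as $\omega$ is strictly convex on $\mathrm{int}(\mathrm{dom}\,\omega)$) and the first term is linear in $y$. Since $V_{\ell}(x)$ is the (unique) sup in $y$ and the min in $c^{\ast}$ is attained on the compact set $G$, Sion's theorem yields a saddle point $(V_{\ell}(x), c_{\ell}(x))$. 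The saddle-point property immediately gives both displayed equalities in (i): fixing $c^{\ast}=c_{\ell}(x)$ reproduces $V_{\ell}(x)$ as the argsup, and fixing $y=V_{\ell}(x)$ shows $c_{\ell}(x)$ achieves the inner min.

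For (ii), since $V_{\ell}(x)\in\mathrm{int}(\Omega)=\mathrm{int}(\mathrm{dom}\,\omega)$ by Lemma \ref{net}, the unconstrained first-order condition for the argsup in (\ref{e3}) (now a smooth concave problem after fixing $c_{\ell}(x)$) reads
\begin{equation*}
-JF(x)^{T}c_{\ell}(x) - \ell\bigl(\nabla\omega(V_{\ell}(x)) - \nabla\omega(x)\bigr) = 0,
\end{equation*}
so $\nabla\omega(V_{\ell}(x)) = \nabla\omega(x) - \tfrac{1}{\ell}JF(x)^{T}c_{\ell}(x)$. Applying Lemma \ref{lroc}(iii), i.e.\ $(\nabla\omega)^{-1}=\nabla\omega^{\ast}$, yields (ii).

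For (iii), substitute the identity from (ii) into the left-hand side:
\begin{equation*}
\langle c_{\ell}(x), JF(x)(x-V_{\ell}(x))\rangle = \ell\,\langle \nabla\omega(x)-\nabla\omega(V_{\ell}(x)), x-V_{\ell}(x)\rangle,
\end{equation*}
and then invoke the three-point identity $D_{\omega}(x,V_{\ell}(x))+D_{\omega}(V_{\ell}(x),x) = \langle \nabla\omega(x)-\nabla\omega(V_{\ell}(x)), x-V_{\ell}(x)\rangle$, which follows directly from expanding both Bregman terms. Finally, (iv) is obtained by plugging $y=V_{\ell}(x)$ into the formula from (i), $v_{\ell}(x)=\langle c_{\ell}(x), JF(x)(x-V_{\ell}(x))\rangle - \ell D_{\omega}(V_{\ell}(x),x)$, and substituting (iii) to cancel one of the Bregman terms. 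The main technical hurdle is the saddle-point step (i); once the existence of $c_{\ell}(x)$ with the two characterizations is in hand, parts (ii)--(iv) are short computations.
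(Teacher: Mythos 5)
Your proposal is correct and follows essentially the same route as the paper: Sion's minimax theorem (using compactness and convexity of $G$ and concavity in $y$) to produce $c_{\ell}(x)$ and the two characterizations in (i), then the interior first-order condition $-JF(x)^{T}c_{\ell}(x)-\ell(\nabla\omega(V_{\ell}(x))-\nabla\omega(x))=0$ together with $(\nabla\omega)^{-1}=\nabla\omega^{*}$ for (ii), the standard identity $\langle\nabla\omega(x)-\nabla\omega(V_{\ell}(x)),x-V_{\ell}(x)\rangle=D_{\omega}(x,V_{\ell}(x))+D_{\omega}(V_{\ell}(x),x)$ for (iii), and substitution into the value of the sup for (iv). No gaps worth noting.
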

\begin{proof}
	(i) Note that $\Omega$ is convex, $G$ is compact and convex, and $\langle c^{*},JF(x)(x-y)\rangle-\ell D_{\omega}(y,x)$ is convex for $c^{*}$ and concave for $y$. Therefore, it follows by Sion's minimax theorem \cite{32} that there exists $c_{\ell}(x)\in G$ such that
	\begin{align*}
		&\ \sup\limits_{y\in\Omega}\min\limits_{c^{*}\in G}\{\langle c^{*},JF(x)(x-y)\rangle-\ell D_{\omega}(y,x)\}\\
		=&\ \min\limits_{c^{*}\in G}\sup\limits_{y\in\Omega}\{\langle c^{*},JF(x)(x-y)\rangle-\ell D_{\omega}(y,x)\}\\
		=&\ \langle c_{\ell}(x),JF(x)(x-V_{\ell}(x))\rangle-\ell D_{\omega}(V_{\ell}(x),x),
	\end{align*}
and
	$$V_{\ell}(x)\in\mathop{\arg\sup}\limits_{y\in\Omega}\{\langle c_{\ell}(x),JF(x)(x-y)\rangle-\ell D_{\omega}(y,x)\},$$
and
	$$c_{\ell}(x)\in\mathop{\arg\min}\limits_{c^{*}\in G}\langle c^{*},JF(x)(x-V_{\ell}(x))\rangle.$$
Hence, we obtain (\ref{ecl}), and (\ref{e3}) follows by the strict convexity of $\omega(\cdot)$.	
	\par (ii) From the optimality condition for (\ref{e3}) and the fact that $V_{\ell}(x)\in\mathrm{int}(\Omega)$, we have
	\begin{equation}\label{e4}
	-JF(x)^{T}c_{\ell}(x)-\ell(\nabla\omega(V_{\ell}(x))-\nabla\omega(x))=0.	
	\end{equation}
Then, assertion (i) follows from the fact that $(\nabla\omega(\cdot))^{-1}=\nabla\omega^{*}(\cdot)$. 
\par (iii) From (\ref{e4}), we have
\begin{align*}
	\langle c_{\ell}(x),JF(x)(x-V_{\ell}(x))\rangle&=\ell\langle\nabla\omega(V_{\ell}(x))-\nabla\omega(x),V_{\ell}(x)-x\rangle\\
	&=\ell(D_{\omega}(V_{\ell}(x),x)+D_{\omega}(x,V_{\ell}(x))).
\end{align*}
Hence, we obtain the desired result.
\par (iv) Using (\ref{e3}) and the fact that $V_{\ell}(x)$ is the unique solution of $v_{\ell}(x)$, we obtain
	\begin{align*}
		v_{\ell}(x)&=\langle c_{\ell}(x),JF(x)(x-V_{\ell}(x))\rangle-\ell D_{\omega}(V_{\ell}(x),x)\\
		&=\ell(D_{\omega}(V_{\ell}(x),x)+D_{\omega}(x,V_{\ell}(x)))-\ell D_{\omega}(V_{\ell}(x),x)\\
		&=\ell D_{\omega}(x,V_{\ell}(x)),
	\end{align*}
where the second equality is given by assertion (ii). The proof is completed.$\ \ \square$
\end{proof}

We can also show the continuity of $v_{\ell}(\cdot)$ and $V_{\ell}(\cdot)$.
\begin{prop}
	For all $\ell>0$, the $v_{\ell}(\cdot)$ and $V_{\ell}(\cdot)$ are continuous on $\mathrm{int}(\Omega)$.
\end{prop}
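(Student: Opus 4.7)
The plan is to derive continuity of $V_\ell$ first and then deduce continuity of $v_\ell$ from the identity $v_\ell(x)=\ell D_\omega(x,V_\ell(x))$ of Proposition \ref{p1}(iv), combined with joint continuity of $D_\omega$ on $\mathrm{int}(\Omega)\times\mathrm{int}(\Omega)$. The core is a subsequential (Berge-type) argmax argument; since $\Omega$ may be unbounded and the maximizers $V_\ell(x)$ live only in $\mathrm{int}(\Omega)$, the main work is a local compactness step that uses Proposition \ref{p1}(ii) and the Legendre structure of $\omega$.

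Fix $\bar x\in\mathrm{int}(\Omega)$ and take $x_k\to\bar x$ in $\mathrm{int}(\Omega)$. First I would show that $\{V_\ell(x_k)\}$ is precompact in $\mathrm{int}(\Omega)$. Writing $H(x,y):=\min_{c^{*}\in G}\{\langle c^{*},JF(x)(x-y)\rangle-\ell D_\omega(y,x)\}$ and using $\|c^{*}\|\le 1$ on $G$, one has $H(x,y)\le \|JF(x)\|\,\|x-y\|-\ell D_\omega(y,x)$; the supercoercivity of $\omega$ (the sufficient condition in Lemma \ref{net}) makes this upper bound tend to $-\infty$ as $\|y\|\to\infty$ uniformly for $x$ in a compact neighborhood of $\bar x$, while $H(x,x)=0$, so the $V_\ell(x_k)$ lie in a fixed ball. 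To keep $\{V_\ell(x_k)\}$ away from $\partial\Omega$, I would invoke Proposition \ref{p1}(ii):
\[
\nabla\omega(V_\ell(x_k))=\nabla\omega(x_k)-\tfrac{1}{\ell}JF(x_k)^{T}c_\ell(x_k),
\]
whose right-hand side is bounded since $\nabla\omega(x_k)\to\nabla\omega(\bar x)$ and $c_\ell(x_k)\in G$ with $G$ compact. If a subsequence of $V_\ell(x_k)$ converged to a boundary point of $\mathrm{dom}\omega$, essential smoothness of $\omega$ would force $\|\nabla\omega(V_\ell(x_k))\|\to\infty$, a contradiction. Hence $\{V_\ell(x_k)\}$ is precompact in $\mathrm{int}(\Omega)$.

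Next, I would extract a subsequential limit $V_\ell(x_{k_j})\to\tilde V\in\mathrm{int}(\Omega)$. Since $H$ is jointly continuous on $\mathrm{int}(\Omega)\times\mathrm{int}(\Omega)$ (minimum of a jointly continuous function over the compact set $G$), passing to the limit in the optimality inequality $H(x_{k_j},V_\ell(x_{k_j}))\ge H(x_{k_j},y)$ yields $H(\bar x,\tilde V)\ge H(\bar x,y)$ for every $y\in\mathrm{int}(\Omega)$; concavity of $H(\bar x,\cdot)$ on $\Omega$ (a minimum over functions that are affine in $y$ minus the convex term $\ell D_\omega(\cdot,\bar x)$) extends the inequality to all $y\in\Omega$ by density, so $\tilde V$ maximizes $H(\bar x,\cdot)$ over $\Omega$. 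Strict convexity of $\omega$ makes this maximizer unique, whence $\tilde V=V_\ell(\bar x)$. Since every subsequential limit of the bounded sequence $\{V_\ell(x_k)\}$ coincides with $V_\ell(\bar x)$, the entire sequence converges to $V_\ell(\bar x)$, giving continuity of $V_\ell$ at $\bar x$. Continuity of $v_\ell$ then follows from $v_\ell(x)=\ell D_\omega(x,V_\ell(x))$ and joint continuity of $D_\omega$.

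The main obstacle is the precompactness step: controlling $V_\ell(x_k)$ both at infinity and near $\partial\Omega$ simultaneously. The bound at infinity rests on supercoercivity of $\omega$, while the boundary bound crucially combines essential smoothness of the Legendre function with the explicit first-order relation in Proposition \ref{p1}(ii); without either ingredient, subsequential limits could escape $\mathrm{int}(\Omega)$ and the argmax-continuity argument would collapse.
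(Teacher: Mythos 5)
Your argument is correct, but it proceeds along a genuinely different route from the paper's. Both proofs share the same first reduction: continuity of $v_{\ell}$ is deduced from $v_{\ell}(x)=\ell D_{\omega}(x,V_{\ell}(x))$ once continuity of $V_{\ell}$ is known. From there, the paper does not use any compactness or Berge-type argmax argument at all: it writes the first-order optimality condition $\ell\langle\nabla\omega(x)-\nabla\omega(V_{\ell}(x)),z-V_{\ell}(x)\rangle=\langle JF(x)^{T}c_{\ell}(x),z-V_{\ell}(x)\rangle$ at $x=\bar{x}$ and at $x=x_{k}$, substitutes each point's maximizer as the test point $z$ in the other's inequality, adds, and uses the minimality property of $c_{\ell}$ (relation (\ref{ecl})) to swap the multipliers; continuity of $JF$ and $\nabla\omega$ then forces $\langle\nabla\omega(V_{\ell}(\bar{x}))-\nabla\omega(V_{\ell}(x_{k})),V_{\ell}(\bar{x})-V_{\ell}(x_{k})\rangle\rightarrow0$, and strict convexity of $\omega$ yields $V_{\ell}(x_{k})\rightarrow V_{\ell}(\bar{x})$. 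Your proof instead establishes precompactness of $\{V_{\ell}(x_{k})\}$ (supercoercivity of $\omega$ to rule out escape to infinity, essential smoothness plus the closed-form relation of Proposition \ref{p1}(ii) to rule out escape to $\mathrm{bd}(\Omega)$) and then identifies every cluster point with the unique maximizer at $\bar{x}$ via joint continuity and concavity of $H(\bar{x},\cdot)$; note that the extension of the limiting optimality inequality from $\mathrm{int}(\Omega)$ to all of $\mathrm{dom}\,\omega$ really does need the concavity (line-segment) argument you mention, not density alone. What your route buys is that the behaviour of the maximizers near $\mathrm{bd}(\Omega)$ and at infinity is handled explicitly, which the paper's passage ``$RHS\rightarrow0$'' quietly presupposes (its first term contains the factor $x_{k}-V_{\ell}(x_{k})$, so boundedness of the maximizers is implicitly used there too); the price is that you invoke supercoercivity of $\omega$, i.e.\ the sufficient condition of Lemma \ref{net}, which is slightly stronger than the paper's standing assumption that $V_{\ell}$ is single-valued, although it is exactly what Algorithm \ref{A1} assumes anyway.
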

\begin{proof}
	From Proposition \ref{p1}(iv), it is sufficient to prove the continuity of $V_{\ell}(\cdot)$.
	For all $x\in \mathrm{int}(\Omega)$, the optimality condition (\ref{e4}) gives
	$$\ell\langle\nabla\omega(x) - \nabla\omega(V_{\ell}(x)), z-V_{\ell}(x) \rangle=\langle JF(x)^{T}c_{\ell}(x),z-V_{\ell}(x)\rangle,\ \forall z\in \Omega.$$
	For $\bar{x}\in\mathrm{int}(\Omega)$ and a sequence $\{x_{k}\}\subset\mathrm{int}(\Omega)$ satisfying $x_{k}\rightarrow\bar{x}$. 
	Substituting $(x,z)=(\bar{x},V_{\ell}(x_{k}))$ and $(x,z)=(x_{k},V_{\ell}(\bar{x}))$ into the above equality, respectively, and sum them up, we have
	\begin{align*}
		&\ell\langle\nabla\omega(x_{k})-\nabla\omega(\bar{x})+\nabla\omega(V_{\ell}(\bar{x}))-\nabla\omega(V_{\ell}(x_{k})),V_{\ell}(\bar{x})-V_{\ell}(x_{k}) \rangle\\
		=&\langle JF(x_{k})^{T}c_{\ell}(x_{k})-JF(\bar{x})^{T}c_{\ell}(\bar{x}),V_{\ell}(\bar{x})-V_{\ell}(x_{k})\rangle\\
		=&\langle JF(x_{k})^{T}c_{\ell}(x_{k}),x_{k}-V_{\ell}(x_{k})\rangle +	\langle JF(\bar{x})^{T}c_{\ell}(\bar{x}),\bar{x}-V_{\ell}(\bar{x})\rangle\\
		&+	\langle JF(x_{k})^{T}c_{\ell}(x_{k}),V_{\ell}(\bar{x})-x_{k}\rangle+\langle JF(\bar{x})^{T}c_{\ell}(\bar{x}),V_{\ell}(x_{k})-\bar{x}\rangle\\
		\leq&\langle JF(x_{k})^{T}c_{\ell}(\bar{x}),x_{k}-V_{\ell}(x_{k})\rangle +	\langle JF(\bar{x})^{T}c_{\ell}(x_{k}),\bar{x}-V_{\ell}(\bar{x})\rangle\\
		&+	\langle JF(x_{k})^{T}c_{\ell}(x_{k}),V_{\ell}(\bar{x})-x_{k}\rangle+\langle JF(\bar{x})^{T}c_{\ell}(\bar{x}),V_{\ell}(x_{k})-\bar{x}\rangle\\
		=&\langle(JF(x_{k})-JF(\bar{x}))^{T}c_{\ell}(\bar{x}),x_{k}-V_{\ell}(x_{k})\rangle +	\langle(JF(\bar{x})-JF(x_{k}))^{T}c_{\ell}(x_{k}),\bar{x}-V_{\ell}(\bar{x})\rangle\\		
		&+	\langle JF(x_{k})^{T}c_{\ell}(x_{k}),\bar{x}-x_{k}\rangle+\langle JF(\bar{x})^{T}c_{\ell}(\bar{x}),x_{k}-\bar{x}\rangle,
	\end{align*}
	where the inequality is given by (\ref{ecl}).
	For simplicity, we denote the right hand side of the above inequality as $RHS$.
	Then, we have
	\begin{align*}
		\ell\langle\nabla\omega(V_{\ell}(\bar{x}))-\nabla\omega(V_{\ell}(x_{k})),V_{\ell}(\bar{x})-V_{\ell}(x_{k}) \rangle
		\leq RHS+\langle\nabla\omega(\bar{x})-\nabla\omega(x_{k}),V_{\ell}(\bar{x})-V_{\ell}(x_{k})\rangle.
	\end{align*}
	Notice that $\omega(\cdot)$ is strictly convex on $\mathrm{int}(\Omega)$, we have
	$$\langle\nabla\omega(V_{\ell}(\bar{x}))-\nabla\omega(V_{\ell}(x_{k})),V_{\ell}(\bar{x})-V_{\ell}(x_{k}) \rangle\geq0.$$
	On the other hand, since $\nabla\omega(\cdot)$ and $JF(\cdot)$ are continuous, and $x_{k}\rightarrow \bar{x}$, we obtain $RHS$ and $\nabla\omega(\bar{x})-\nabla\omega(x_{k})$ tend to $0$. Then $\langle\nabla\omega(V_{\ell}(\bar{x}))-\nabla\omega(V_{\ell}(x_{k})),V_{\ell}(\bar{x})-V_{\ell}(x_{k}) \rangle$ tends to $0$. It follows by the strict convexity of $\omega$ that $V_{\ell}(x_{k})\rightarrow V_{\ell}(\bar{x})$. Hence, the continuity of $\omega(\cdot)$ follows from the arbitrary of $\{x_{k}\}$ and $\bar{x}\in\mathrm{int}(\Omega)$.$\ \ \square$
\end{proof}

\subsection{Vector Bregman-PL inequality}
In this subsection, a noval vector Bregman-PL inequality is derived with merit functions. 
At first, we present the connection between $D_{\omega}(x,\nabla\omega^{*}(\nabla\omega(x)-\ell_{1}JF(x)^{T}c_{\ell}(x)))$ and $D_{\omega}(x,\nabla\omega^{*}(\nabla\omega(x)-\ell_{2}JF(x)^{T}c_{\ell}(x)))$ for some $\ell_{1},\ell_{2}>0$, which will be used to establish the connection between $v_{\ell_{1}}(x)$ and $v_{\ell_{2}}(x)$.
\begin{lem}\label{leq}
	Let $\ell_{1},\ell_{2}>0$, $c^{*}\in G$ and $x\in\mathrm{int}(\Omega)$. If $\ell_{1}\geq\ell_{2}$, we have
	\begin{equation}\label{e10}
		D_{\omega}(x,\nabla\omega^{*}(\nabla\omega(x)-\ell_{1}JF(x)^{T}c^{*}))\geq D_{\omega}(x,\nabla\omega^{*}(\nabla\omega(x)-\ell_{2}JF(x)^{T}c^{*})).
	\end{equation}
\end{lem}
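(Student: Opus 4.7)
The plan is to leverage equation (\ref{ed}) to transfer the inequality to the conjugate side, where the dependence on $\ell$ becomes one-dimensional and transparent. Set $u := \nabla\omega(x)$ and $g := JF(x)^{T}c^{*}$. Using $\nabla\omega^{*} = (\nabla\omega)^{-1}$ from Lemma \ref{lroc}(iii), the point $y_{\ell} := \nabla\omega^{*}(u - \ell g)$ satisfies $\nabla\omega(y_{\ell}) = u - \ell g$, so equation (\ref{ed}) yields
$$D_{\omega}(x, y_{\ell}) \;=\; D_{\omega^{*}}(\nabla\omega(y_{\ell}), \nabla\omega(x)) \;=\; D_{\omega^{*}}(u - \ell g,\, u).$$
It therefore suffices to show that the scalar function $\phi(\ell) := D_{\omega^{*}}(u - \ell g,\, u)$ is non-decreasing in $\ell \geq 0$.

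First I would expand the Bregman distance using its definition together with $\nabla\omega^{*}(u) = x$, which rewrites
$$\phi(\ell) \;=\; \omega^{*}(u - \ell g) - \omega^{*}(u) + \ell\,\langle x, g\rangle.$$
The first summand is the composition of the convex function $\omega^{*}$ (convex because Legendre, via Lemma \ref{lroc}(i)) with an affine function of $\ell$, and the remainder is linear in $\ell$; hence $\phi$ is a convex function of the single variable $\ell$ on the interval where $u - \ell g \in \mathrm{int}(\mathrm{dom}\,\omega^{*})$.

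Next I would evaluate $\phi$ and its derivative at the origin. Plainly $\phi(0)=0$, and differentiating gives $\phi'(\ell) = -\langle \nabla\omega^{*}(u - \ell g), g\rangle + \langle x, g\rangle$, so $\phi'(0) = -\langle x, g\rangle + \langle x, g\rangle = 0$. A convex function whose derivative vanishes at $\ell = 0$ attains its global minimum there and is non-decreasing on $[0,\infty)$; consequently $\ell_{1} \geq \ell_{2} > 0$ immediately yields $\phi(\ell_{1}) \geq \phi(\ell_{2})$, which is exactly \eqref{e10}.

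The only delicate point I foresee is ensuring that $u - \ell g$ stays inside $\mathrm{int}(\mathrm{dom}\,\omega^{*})$ along the segment joining $\ell_{2}$ to $\ell_{1}$, so that the application of Lemma \ref{lroc} and the pointwise differentiation of $\phi$ are both legitimate. Given the standing assumption that $V_{\ell}$ is a well-defined single-valued map into $\mathrm{int}(\Omega)$ together with the Legendre properties of $\omega^{*}$, this check is essentially automatic, and I expect it to be the only technicality; everything else reduces to one-dimensional convex calculus.
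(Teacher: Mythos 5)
Your argument is correct, and it is essentially the proof the paper has in mind: the paper omits the details and defers to \cite[Lemma 3.5]{nlns}, whose argument is exactly this reduction via (\ref{ed}) to the scalar map $\ell\mapsto D_{\omega^{*}}(\nabla\omega(x)-\ell JF(x)^{T}c^{*},\nabla\omega(x))$, shown nondecreasing on $[0,\infty)$ through convexity of $\omega^{*}$ (equivalently, monotonicity of $\nabla\omega^{*}$) together with $\nabla\omega^{*}(\nabla\omega(x))=x$ from Lemma \ref{lroc}. Your domain remark is also easily settled: both endpoints $\nabla\omega(x)$ and $\nabla\omega(x)-\ell_{1}JF(x)^{T}c^{*}$ lie in $\mathrm{int(dom}\omega^{*})$ (the latter is needed for the statement to make sense), so the whole segment does by convexity of $\mathrm{dom}\omega^{*}$; under the supercoercivity used in Algorithm \ref{A1} one even has $\mathrm{dom}\omega^{*}=\mathbb{R}^{n}$.
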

\begin{proof}
	The proof is similar to the arguments in the proof of \cite[Lemma 3.5]{nlns}, we omit it here.$\ \ \square$
\end{proof}

Note that it is not clear whether the inequality holds for $0<\ell_{1}<\ell_{2}$ in Lemma \ref{leq}. To overcome this difficulty we present the following assumption.
\begin{assumption}\label{a1}
	Let $\ell_{1},\ell_{2}>0$, $c^{*}\in G$ and $x\in\mathrm{int}(\Omega)$. Then there exists $\theta:\mathbb{R}_{++}\rightarrow\mathbb{R}_{++}$ such that
	\begin{equation}\label{e9}
		D_{\omega}(x,\nabla\omega^{*}(\nabla\omega(x)-\ell_{1}JF(x)^{T}c^{*}))\geq\theta({\frac{\ell_{1}}{\ell_{2}}})	D_{\omega}(x,\nabla\omega^{*}(\nabla\omega(x)-\ell_{2}JF(x)^{T}c^{*}))
	\end{equation}
\end{assumption}

The Assumption \ref{a1} seems strict in general. Indeed, we have the following sufficient condition for Assumption \ref{a1}, which is first presented in \cite{nlns}.
\begin{rem}
	\begin{itemize}
		\item[$\mathrm{(i)}$] If $\ell_{1}\geq\ell_{2}$, Assumption \ref{a1} holds with $\theta(\cdot)\equiv1$.
		\item[$\mathrm{(ii)}$] Assumption \ref{a1} holds when $\omega(\cdot)$ is $\sigma$-strongly convex and $\kappa$-smooth. In this setting, the conjugate $\omega^{*}(\cdot)$ satisfies
		$$\frac{\|u-v\|^{2}}{2\kappa}\leq D_{\omega^{*}}(u,v)\leq\frac{\|u-v\|^{2}}{2\sigma},\ \forall u,v\in\mathrm{int}(\Omega).$$
	\end{itemize} 
It follows by (\ref{ed}) and the above inequalities that
$$D_{\omega}(x,\nabla\omega^{*}(\nabla\omega(x)-\ell_{1}JF(x)^{T}c^{*}))\geq\frac{\ell_{1}^{2}}{2\kappa}\|JF(x)^{T}c^{*}\|^{2},$$
and
$$\|JF(x)^{T}c^{*}\|^{2}\geq \frac{2\sigma}{\ell_{2}^{2}}D_{\omega}(x,\nabla\omega^{*}(\nabla\omega(x)-\ell_{2}JF(x)^{T}c^{*})).$$
Then Assumption \ref{a1} holds with $\theta(t):=\frac{\sigma t^{2}}{\kappa}$.
\end{rem}

We are now in a position to present the relation between $v_{\ell_{1}}(\cdot)$ and $v_{\ell_{2}}(\cdot)$.
\begin{prop}\label{eb1}
	Suppose Assumption \ref{a1} holds. Let $\ell_{1},\ell_{2}>0$, and $x\in \mathrm{int}(\Omega)$. For $\ell_{1}\geq\ell_{2}$, we have
	\begin{equation}\label{e11}
		v_{\ell_{1}}(x)\leq v_{\ell_{2}}(x)\leq \frac{\ell_{2}}{\theta(\frac{\ell_{2}}{\ell_{1}})\ell_{1}}v_{\ell_{1}}(x).
	\end{equation}
\end{prop}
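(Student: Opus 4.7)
The plan is to first derive a clean variational representation of $v_\ell(x)$ that separates the roles of $\ell$ and of the dual vector $c^*\in G$. Repeating the calculation behind Proposition \ref{p1}(iii) with an arbitrary $c^*\in G$ (rather than only $c_\ell(x)$) shows that the unique maximizer of $y\mapsto\langle c^*,JF(x)(x-y)\rangle-\ell D_\omega(y,x)$ over $\Omega$ is $y_\ell^*(c^*):=\nabla\omega^*\bigl(\nabla\omega(x)-\tfrac{1}{\ell}JF(x)^T c^*\bigr)$, and the corresponding maximum value simplifies to $\ell D_\omega(x,y_\ell^*(c^*))$, because the cross term $\langle c^*,JF(x)(x-y_\ell^*(c^*))\rangle=\ell\langle\nabla\omega(x)-\nabla\omega(y_\ell^*(c^*)),x-y_\ell^*(c^*)\rangle=\ell(D_\omega(x,y_\ell^*(c^*))+D_\omega(y_\ell^*(c^*),x))$. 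Coupled with the minimax identity from Proposition \ref{p1}(i), this yields the useful formula $v_\ell(x)=\ell\min_{c^*\in G}D_\omega(x,y_\ell^*(c^*))$, with $V_\ell(x)=y_\ell^*(c_\ell(x))$ realizing the minimum.

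The left inequality $v_{\ell_1}(x)\le v_{\ell_2}(x)$ is then essentially immediate from the original definition (\ref{v}): since $D_\omega(y,x)\ge 0$ and $\ell_1\ge\ell_2$, at every fixed $y\in\Omega$ the quantity $\min_{c^*\in G}\{\langle c^*,JF(x)(x-y)\rangle-\ell D_\omega(y,x)\}$ is a non-increasing function of $\ell$, so taking the supremum in $y$ preserves the inequality.

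For the right inequality I would plug the minimizer $c^*=c_{\ell_1}(x)$ of the $v_{\ell_1}$ problem into the closed form above. On one hand this gives an exact expression $v_{\ell_1}(x)=\ell_1 D_\omega(x,y_{\ell_1}^*(c_{\ell_1}(x)))$ by Proposition \ref{p1}(ii),(iv); on the other, since $c_{\ell_1}(x)$ is not the minimizer for $v_{\ell_2}$, it produces only an upper bound $v_{\ell_2}(x)\le \ell_2 D_\omega(x,y_{\ell_2}^*(c_{\ell_1}(x)))$. The two vectors $y_{\ell_1}^*(c_{\ell_1}(x))$ and $y_{\ell_2}^*(c_{\ell_1}(x))$ are precisely the points appearing in Assumption \ref{a1}, where the multipliers of $JF(x)^T c_{\ell_1}(x)$ are $\tfrac{1}{\ell_1}$ and $\tfrac{1}{\ell_2}$ respectively. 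Applying Assumption \ref{a1} with its $\ell_1,\ell_2$ set to $\tfrac{1}{\ell_1},\tfrac{1}{\ell_2}$ (so the ratio inside $\theta$ becomes $\tfrac{1/\ell_1}{1/\ell_2}=\ell_2/\ell_1\le 1$) yields
\[
D_\omega(x,y_{\ell_1}^*(c_{\ell_1}(x)))\ge \theta(\ell_2/\ell_1)\,D_\omega(x,y_{\ell_2}^*(c_{\ell_1}(x))),
\]
and combining with the two bounds above gives $v_{\ell_2}(x)\le\tfrac{\ell_2}{\theta(\ell_2/\ell_1)\ell_1}v_{\ell_1}(x)$, as required.

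The only delicate point, and where I expect the main bookkeeping to happen, is aligning the roles of $\ell_1,\ell_2$ between Assumption \ref{a1} (where they multiply $JF(x)^T c^*$ directly) and the proposition (where they appear as reciprocals in the definition of $y_\ell^*(c^*)$). Once the reciprocal substitution is done so that $\theta$ is evaluated at $\ell_2/\ell_1\in(0,1]$, the remainder of the argument is algebraic rearrangement.
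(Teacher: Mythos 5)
Your proposal is correct and follows essentially the same route as the paper: the left inequality from monotonicity of the objective in $\ell$, and the right inequality by evaluating the $v_{\ell_2}$ problem at the suboptimal dual vector $c_{\ell_1}(x)$, identifying the resulting value as $\ell_2 D_\omega\bigl(x,\nabla\omega^*(\nabla\omega(x)-\tfrac{1}{\ell_2}JF(x)^T c_{\ell_1}(x))\bigr)$ via the Proposition \ref{p1}(iv) argument, and then invoking Assumption \ref{a1} with the reciprocal multipliers so that $\theta$ is evaluated at $\ell_2/\ell_1$. The explicit representation $v_\ell(x)=\ell\min_{c^*\in G}D_\omega(x,y^*_\ell(c^*))$ is just a slightly more packaged form of the same computation the paper performs inline.
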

\begin{proof}
	The left hand side of inequalities (\ref{e11}) follows directly from the definition of $v_{\ell}(\cdot)$. Next, we proof the right hand side of inequalities (\ref{e11}).
	From (\ref{e3}) and the definition of $c_{\ell}(\cdot)$, we have
	$$v_{\ell_{2}}(x)\leq\sup\limits_{y\in\Omega}\{\langle c_{\ell_{1}}(x),JF(x)(x-y)\rangle-\ell_{2} D_{\omega}(y,x)\}.$$
	Using the similar argument in the proof of Proposition \ref{p1}(iv), we obtain
	\begin{align*}
		v_{\ell_{2}}(x)\leq\sup\limits_{y\in\Omega}\{\langle c_{\ell_{1}}(x),JF(x)(x-y)\rangle-\ell_{2} D_{\omega}(y,x)\}=\ell_{2}D_{\omega}(x,\nabla\omega^{*}(\nabla\omega(x)-\frac{1}{\ell_{2}}JF(x)^{T}c_{\ell_{1}}(x))).
	\end{align*}	
We further use inequality (\ref{e9}) to get
\begin{align*}
	v_{\ell_{2}}(x)&\leq\frac{\ell_{2}}{\theta(\frac{\ell_{2}}{\ell_{1}})}D_{\omega}(x,\nabla\omega^{*}(\nabla\omega(x)-\frac{1}{\ell_{1}}JF(x)^{T}c_{\ell_{1}}(x)))\\
	&=\frac{\ell_{2}}{\theta(\frac{\ell_{2}}{\ell_{1}})\ell_{1}}v_{\ell_{1}}(x),
\end{align*}
where the equality is given by Proposition \ref{p1}(iv). Therefore, we obtain the right hand side of inequalities (\ref{e11}).$\ \ \square$
\end{proof}

Moreover, we present the relations between $u_{0}(\cdot)$ and $v_{\ell}(\cdot)$.
\begin{prop}\label{eb2}
	For the merit functions $u_{0}(\cdot)$ and $v_{\ell}(\cdot)$, the following statements hold.
	\begin{itemize}
		\item[$\mathrm{(i)}$] If $F(\cdot)$ is $(L,C,e)$-smooth relative to $\omega(\cdot)$ for some $L>0$, then 
		\begin{equation}\label{evu}
			v_{L}(x)\leq u_{0}(x),\ \forall x\in\mathrm{int}(\Omega). 
		\end{equation}
		\item[$\mathrm{(ii)}$] If $F(\cdot)$ is $(\mu,C,e)$-strongly convex relative to $\omega(\cdot)$ for some $\mu\geq0$, then 
		\begin{equation}\label{e12}
			u_{0}(x)\leq v_{\delta\mu}(x),\ \forall x\in\mathrm{int}(\Omega). 
		\end{equation}
	\end{itemize} 
\end{prop}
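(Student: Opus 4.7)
The plan is to reduce both inequalities to pointwise scalar comparisons indexed by $c^* \in G$, invoke the $C$-convexity characterization of relative smoothness and relative strong convexity through Remark \ref{rem1}, and only at the end take $\min_{c^*\in G}$ followed by $\sup_{y\in\Omega}$. The normalizations (\ref{max}) and (\ref{min}) confine $\langle c^*,e\rangle$ to the interval $[\delta,1]$ on $G$, and this is exactly what converts the vector inequalities into scalar ones matching the integrands of $v_L$ and $v_{\delta\mu}$.

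For part (i), Remark \ref{rem1} applied to Proposition \ref{rs}(i) says that $(L,C,e)$-smoothness is equivalent to the family of scalar inequalities
$$\langle c^*, JF(x)(x-y)\rangle - L\langle c^*,e\rangle D_\omega(y,x) \leq \langle c^*, F(x)-F(y)\rangle$$
valid for every $c^* \in G$ and every $y \in \mathrm{int}(\Omega)$. Since $\langle c^*,e\rangle \leq 1$ by (\ref{max}) and $D_\omega(y,x) \geq 0$, replacing the coefficient $L\langle c^*,e\rangle$ by $L$ only enlarges the subtracted term, so the inequality persists with $L$ in place of $L\langle c^*,e\rangle$. The resulting left-hand side is precisely the integrand of $v_L(x)$; taking $\min_{c^*\in G}$ and then $\sup_{y\in\Omega}$ yields $v_L(x) \leq u_0(x)$.

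Part (ii) is symmetric in spirit but uses the other normalization. The $(\mu,C,e)$-strong convexity yields the pointwise inequality
$$\langle c^*, F(x)-F(y)\rangle \leq \langle c^*, JF(x)(x-y)\rangle - \mu\langle c^*,e\rangle D_\omega(y,x).$$
Now I need the subtracted term to carry a coefficient of at most $\delta\mu$, so that the right-hand side becomes the integrand of $v_{\delta\mu}(x)$. By (\ref{min}), $\langle c^*,e\rangle \geq \delta>0$ on $G$, hence $\mu\langle c^*,e\rangle D_\omega(y,x) \geq \delta\mu D_\omega(y,x)$, and the right-hand side is bounded above by $\langle c^*, JF(x)(x-y)\rangle - \delta\mu D_\omega(y,x)$. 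Minimizing over $c^*\in G$ and supremizing over $y\in\Omega$ gives $u_0(x) \leq v_{\delta\mu}(x)$.

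I do not expect a genuine obstacle; the argument is a pair of one-line sandwiches. The only point requiring care is orienting the direction in which the slack $\langle c^*,e\rangle \in [\delta,1]$ is spent: in (i) it must weaken the left-hand side (hence replace the factor by the upper bound $1$), whereas in (ii) it must weaken the right-hand side (hence replace the factor by the lower bound $\delta$). This asymmetry is precisely why the constants $L$ and $\delta\mu$, and not $\delta L$ and $\mu$, are the right ones on the two sides of (\ref{evu}) and (\ref{e12}).
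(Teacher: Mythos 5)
Your proof is correct and uses the same ingredients as the paper's: scalarize the vector inequalities through $c^*\in G\subset C^*$ and then spend the normalizations (\ref{max}) and (\ref{min}) to trade $\langle c^*,e\rangle$ for the constants $1$ and $\delta$. The only real difference is organizational, and it works in your favor: you bound $\langle c^*,e\rangle$ pointwise in $c^*$ \emph{before} taking $\min_{c^*\in G}$ and $\sup_{y\in\Omega}$, whereas the paper first splits $\min_{c^*\in G}\langle c^*,JF(x)(x-y)-\mu D_{\omega}(y,x)e\rangle$ into a sum of two minima; in part (ii) that intermediate step is written as $\min(a+b)\leq\min a+\min b$, which is the wrong direction in general (min is superadditive), so your pointwise use of $\langle c^*,e\rangle\geq\delta$ inside the minimand is the clean way to arrive at (\ref{e12}), while for part (i) the two arguments coincide up to the order of operations.
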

\begin{proof}
	(i) From the $(L,C,e)$-smoothness of $F(\cdot)$, we have
	$$F(y)\preceq_{C}F(x)+JF(x)(y-x)+LD_{\omega}(y,x)e,\ \forall y\in\Omega.$$
	Hence, 
	\begin{align*}
		\sup\limits_{y\in\Omega}\min\limits_{c^{*}\in G}\{\langle c^{*},F(x)-F(y)\rangle\}&
		\geq\sup\limits_{y\in\Omega}\min\limits_{c^{*}\in G}\{\langle c^{*},JF(x)(x-y)-L D_{\omega}(y,x)e\rangle\}\\
		&\geq\sup\limits_{y\in\Omega}\{\min\limits_{c^{*}\in G}\{\langle c^{*},JF(x)(x-y)\rangle\}+
		\min\limits_{c^{*}\in G}\{\langle c^{*},-L D_{\omega}(y,x)e\rangle\}\}\\
		&\geq\sup\limits_{y\in\Omega}\min\limits_{c^{*}\in G}\{\langle c^{*},JF(x)(x-y)\rangle- LD_{\omega}(y,x)\},
	\end{align*}
where the last inequality is given by (\ref{max}). It follows that $u_{0}(x)\geq v_{L}(x)$.
\par From the $(\mu,C,e)$-strong comvexity of $F(\cdot)$, we have
$$F(x)+JF(x)(y-x)+\mu D_{\omega}(y,x)e\preceq_{C}F(y).$$
Therefore, 
\begin{align*}
	\sup\limits_{y\in\Omega}\min\limits_{c^{*}\in G}\{\langle c^{*},F(x)-F(y)\rangle\}&
	\leq\sup\limits_{y\in\Omega}\min\limits_{c^{*}\in G}\{\langle c^{*},JF(x)(x-y)-\mu D_{\omega}(y,x)e\rangle\}\\
	&\leq\sup\limits_{y\in\Omega}\{\min\limits_{c^{*}\in G}\{\langle c^{*},JF(x)(x-y)\rangle\}+
	\min\limits_{c^{*}\in G}\{\langle c^{*},-\mu D_{\omega}(y,x)e\rangle\}\}\\
	&\leq\sup\limits_{y\in\Omega}\min\limits_{c^{*}\in G}\{\langle c^{*},JF(x)(x-y)\rangle- \delta\mu D_{\omega}(y,x)\},
\end{align*}
where the last inequality is given by (\ref{min}). Hence, $u_{0}(x)\leq v_{\delta\mu}(x)$.$\ \ \square$
\end{proof}

We propose the following error bound property, which will be used in convergence rate analysis.
\begin{defi}
	We say that vector Bregman-PL inequality holds for $F(\cdot)$ on $\mathrm{int}(\Omega)$ when
	\begin{equation}\label{eb}
		u_{0}(x)\leq \tau(\ell)v_{\ell}(x),\ \forall x\in\mathrm{int}(\Omega), \ell>0,
	\end{equation}	
where $\tau:\mathbb{R}_{++}\rightarrow\mathbb{R}_{++}$.
\end{defi}
\begin{rem}
	The vector Bregman-PL inequality (\ref{eb}) corresponds to the general PL inequality when $m=1$ and $\omega(\cdot)=\frac{1}{2}\|\cdot\|^{2}$. It is also a generalization for gradient dominated inequality \cite[Definition 3.2]{nlns} in the context of vector optimization problems. Recently, a multiobjective proximal-PL inequality was established in \cite[Definition 4.1]{pcom}. The vector Bregman-PL inequality coincides with multiobjective proximal-PL inequality when $\omega(\cdot)=\frac{1}{2}\|\cdot\|^{2}$ and $g_{i}(\cdot)=\sigma_{\Omega}(\cdot)$ for all $i\in[m]$.
\end{rem}

By virtue of Propositions \ref{eb1} and \ref{eb2}, we give the sufficient conditions for vector Bregman-PL inequality.
\begin{prop}
	Suppose that Assumption \ref{a1} holds and $F(\cdot)$ is $(\mu,C,e)$-strongly convex relative to $\omega(\cdot)$ for some $\mu\geq0$. Then for all $x\in\mathrm{int}(\Omega)$ and $\ell>0$, vector Bregman-PL inequality (\ref{eb}) holds with $\tau(\ell)=\frac{\delta\mu}{\theta(\frac{\delta\mu}{\ell})\ell}$.
\end{prop}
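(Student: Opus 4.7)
The plan is to chain the two inequalities already at hand. As a first step I would invoke Proposition \ref{eb2}(ii): since $F(\cdot)$ is $(\mu,C,e)$-strongly convex relative to $\omega(\cdot)$, we have $u_{0}(x)\leq v_{\delta\mu}(x)$ for every $x\in\mathrm{int}(\Omega)$. As a second step I would invoke the upper-bound half of Proposition \ref{eb1}, which (under Assumption \ref{a1}) with $\ell_{1}=\ell$ and $\ell_{2}=\delta\mu$ gives
$$v_{\delta\mu}(x)\leq \frac{\delta\mu}{\theta(\delta\mu/\ell)\,\ell}\,v_{\ell}(x).$$
Chaining the two bounds produces exactly $u_{0}(x)\leq\tau(\ell)\,v_{\ell}(x)$ with $\tau(\ell)=\frac{\delta\mu}{\theta(\delta\mu/\ell)\,\ell}$, which is the claimed vector Bregman-PL inequality.

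The main obstacle I anticipate is that Proposition \ref{eb1} was stated under the hypothesis $\ell_{1}\geq\ell_{2}$, whereas here we need the conclusion for \emph{every} $\ell>0$, including the case $\ell<\delta\mu$. To resolve this I would observe that the ordering $\ell_{1}\geq\ell_{2}$ is only used to produce the trivial left-hand inequality $v_{\ell_{1}}\leq v_{\ell_{2}}$, which is just monotonicity of $v_{\ell}$ in $\ell$ and which we do not need. The right-hand inequality -- the one we actually use -- rests on three ingredients that do not require any ordering of $\ell_{1}$ and $\ell_{2}$: the bound $v_{\ell_{2}}(x)\leq\sup_{y\in\Omega}\{\langle c_{\ell_{1}}(x),JF(x)(x-y)\rangle-\ell_{2}D_{\omega}(y,x)\}$ coming from choosing $c^{\ast}=c_{\ell_{1}}(x)$ inside the $\min$, Proposition \ref{p1}(iv) which rewrites this supremum as $\ell_{2}D_{\omega}(x,\nabla\omega^{\ast}(\nabla\omega(x)-\tfrac{1}{\ell_{2}}JF(x)^{T}c_{\ell_{1}}(x)))$, and Assumption \ref{a1} applied with the parameters $1/\ell_{1}$ and $1/\ell_{2}$ in the roles of $\ell_{1},\ell_{2}$, which turns the ratio inside $\theta$ into $\ell_{2}/\ell_{1}$. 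None of these three steps imposes a sign condition on $\ell_{1}-\ell_{2}$, so the upper bound in Proposition \ref{eb1} extends verbatim to arbitrary $\ell_{1},\ell_{2}>0$.

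With this extension in place the whole argument is essentially a one-line chain of the two propositions, and no hypothesis beyond Assumption \ref{a1} and $(\mu,C,e)$-relative strong convexity is needed. I do not expect any technical complication beyond the observation about Proposition \ref{eb1} above.
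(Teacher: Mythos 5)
Your proposal is correct and follows essentially the same route as the paper, whose proof is simply the chain of Proposition \ref{eb2}(ii) with the right-hand inequality of Proposition \ref{eb1}. Your additional observation --- that the upper bound in Proposition \ref{eb1} does not actually need $\ell_{1}\geq\ell_{2}$, since Assumption \ref{a1} is stated for arbitrary positive parameters and the choice $c^{*}=c_{\ell_{1}}(x)$ together with Proposition \ref{p1}(ii)--(iv) imposes no ordering --- is a valid and worthwhile refinement, as the paper's one-line proof silently glosses over the case $\ell<\delta\mu$ that the statement's ``for all $\ell>0$'' requires.
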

\begin{proof}
	The result follows directly from inequalities (\ref{e11}) and (\ref{e12}).$\ \ \square$
\end{proof}

\section{ Interior Bregman Gradient Method for VOPs}
In this section, we first introduce interior Bregman gradient method for VOPs. 
\begin{algorithm} 
	\caption{{\ttfamily{interior\_Bregman\_gradient\_method\_for\_VOPs}}}\label{A1}
	\SetAlgoLined  
	\KwData{$x^{0}\in\mathrm{int}(\Omega),\ \ell>0$,\ a supercoercive Legendre function $\omega$ with $\Omega=\mathrm{cl}(\mathrm{dom}\omega)$ and $F(\cdot)$ is $(L,C,e)$-smooth related to $\omega(\cdot)$.}
	\For{$k=0,1,...$}{  $x^{k+1}=V_{\ell}(x^{k})$  \\
		\If{$x^{k+1}=x^{k}$}{ {\bf{return}} Pareto critical point $x^{k}$ }}  
\end{algorithm}

The proposed algorithm generates a sequence via the following iterates:
$$x^{k+1}=V_{\ell}(x^{k}).$$
From Lemma \ref{net} and the strict convexity of $\omega(\cdot)$, the algorithm is well-defined, and the generated sequence $\{x^{k}\}$ lies in $\mathrm{int}(\Omega)$. 
Note that $V_{\ell}(x^{k})$ is the unique solution of 
$$v_{\ell}(x^{k})=\sup\limits_{y\in\Omega}\min\limits_{c^{*}\in G}\{\langle c^{*},JF(x^{k})(x^{k}-y)\rangle-\ell D_{\omega}(y,x^{k})\},$$
then
$$V_{\ell}(x^{k})=\mathop{\arg\inf}\limits_{y\in\Omega}\max\limits_{c^{*}\in G}\{\langle c^{*},JF(x^{k})(y-x^{k})\rangle+\ell D_{\omega}(y,x^{k})\}.$$
Define mapping $\varphi:\mathbb{R}^{m}\rightarrow\mathbb{R}$ as $$\varphi(F(x)):=\max\limits_{c^{*}\in G}\{\langle c^{*},F(x)\rangle.$$
From the iterates in Algorithm \ref{A1}, we deduce the following sufficient descent property.
\begin{lem}[Sufficient descent property]\label{l1}
	Assume that $F(\cdot)$ is $(L,C,e)$-smooth relative to $\omega(\cdot)$, and $\ell>0$. Then the sequence $\{x^{k}\}$ produced by Algorithm \ref{A1} satisfies
	\begin{equation}\label{e5}
		\varphi(F(x^{k+1})-F(x^{k}))\leq-((1+\alpha(\omega))\ell-L)D_{\omega}(x^{k+1},x^{k}).
	\end{equation}
Furthermore, the sufficient descent property holds when
$$\ell>\frac{L}{1+\alpha(\omega)}.$$
\end{lem}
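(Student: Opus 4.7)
The plan is to apply the scalarizing functional $\varphi(\cdot)=\max_{c^{*}\in G}\langle c^{*},\cdot\rangle$ to the $(L,C,e)$-smoothness inequality and then evaluate the resulting linear term using the optimality characterization of $V_{\ell}$ from Proposition~\ref{p1}. Evaluating the relative smoothness inequality at $(x,y)=(x^{k},x^{k+1})$ gives
\begin{equation*}
F(x^{k+1})-F(x^{k})\preceq_{C}JF(x^{k})(x^{k+1}-x^{k})+LD_{\omega}(x^{k+1},x^{k})e.
\end{equation*}
Since $G\subset C^{*}$, the order $\preceq_{C}$ is preserved by $\varphi$; combined with the sub-additivity of $\varphi$ and the normalization $\max_{c^{*}\in G}\langle c^{*},e\rangle=1$ from (\ref{max}), this produces
\begin{equation*}
\varphi(F(x^{k+1})-F(x^{k}))\leq \max_{c^{*}\in G}\langle c^{*},JF(x^{k})(x^{k+1}-x^{k})\rangle+LD_{\omega}(x^{k+1},x^{k}).
\end{equation*}

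Next I would identify the remaining maximum. Since $x^{k+1}=V_{\ell}(x^{k})$, (\ref{ecl}) says that $c_{\ell}(x^{k})$ minimizes $\langle c^{*},JF(x^{k})(x^{k}-x^{k+1})\rangle$ over $G$, and hence maximizes $\langle c^{*},JF(x^{k})(x^{k+1}-x^{k})\rangle$. Proposition~\ref{p1}(iii) then delivers
\begin{equation*}
\max_{c^{*}\in G}\langle c^{*},JF(x^{k})(x^{k+1}-x^{k})\rangle=-\ell\bigl(D_{\omega}(x^{k},x^{k+1})+D_{\omega}(x^{k+1},x^{k})\bigr).
\end{equation*}
Substituting this and using the defining inequality $D_{\omega}(x^{k},x^{k+1})\geq\alpha(\omega)D_{\omega}(x^{k+1},x^{k})$ of the symmetry coefficient yields exactly (\ref{e5}); the sufficient descent condition $\ell>L/(1+\alpha(\omega))$ then follows because $D_{\omega}(x^{k+1},x^{k})>0$ whenever $x^{k+1}\neq x^{k}$.

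The main point needing care is the passage from the cone majorant to a scalar inequality via $\varphi$: one must observe that $\preceq_{C}$ is preserved by every $\langle c^{*},\cdot\rangle$ with $c^{*}\in G\subset C^{*}$, and then apply the normalization (\ref{max}) at precisely the right place so that the factor in front of $LD_{\omega}(x^{k+1},x^{k})$ is exactly $1$. Once this reduction is completed, the identities from Proposition~\ref{p1}(iii) and the defining inequality of $\alpha(\omega)$ close the argument without further subtlety.
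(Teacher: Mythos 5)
Your proposal is correct and follows essentially the same route as the paper's proof: scalarize the $(L,C,e)$-smoothness inequality with $\varphi$ using its monotonicity, subadditivity and the normalization (\ref{max}), evaluate $\varphi(JF(x^{k})(x^{k+1}-x^{k}))$ via (\ref{ecl}) together with Proposition \ref{p1}(iii), and finish with the defining inequality of $\alpha(\omega)$. Your write-up is in fact slightly more explicit than the paper about why the maximum over $G$ is attained at $c_{\ell}(x^{k})$, but there is no substantive difference in the argument.
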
 
\begin{proof}
From the the definition of $(L,C,e)$-smoothness, we have
\begin{align*}
	F(x^{k+1})-F(x^{k})\preceq_{C}JF(x^{k})(x^{k+1}-x^{k})+LD_{\omega}(x^{k+1},x^{k})e.
\end{align*}
Then 
\begin{align*}
	\varphi(F(x^{k+1})-F(x^{k}))&\leq \varphi(JF(x^{k})(x^{k+1}-x^{k}))+LD_{\omega}(x^{k+1},x^{k})\\
	&=-\ell D_{\omega}(x^{k+1},x^{k}) - \ell D_{\omega}(x^{k},x^{k+1})+LD_{\omega}(x^{k+1},x^{k})\\
	&\leq-((1+\alpha(\omega))\ell-L)D_{\omega}(x^{k+1},x^{k}),
\end{align*}
where the first equality follows from $x^{k+1}=V_{\ell}(x^{k})$, the definitions of $\varphi(\cdot)$, relation (\ref{ecl}) and Proposition \ref{p1}(iii), the second equality is given by the definition of $\alpha(\omega)$. This together with the inequality $\ell>\frac{L}{1+\alpha(\omega)}$ yield the sufficient descent property.$\ \ \square$
\end{proof}

We can see that Algorithm \ref{A1} terminates with an $C$-stationary point in a finite number of iterations or generates an infinite sequence. In the sequel, we will suppose that Algorithm \ref{A1} generates an infinite sequence of nonstationary points. Firstly, we present the global convergence of Algorithm \ref{A1} in nonconvex case.
\begin{theo}\label{t2}
	Assume that $\{x:F(x)\preceq_{C} F(x^{0})\}$ is bounded, $F(\cdot)$ is $(L,C,e)$-smooth relative to $\omega(\cdot)$ for some $L>0$, and $\ell>\frac{L}{1+\alpha(\omega)}$. Let $\{x^{k}\}$ be the sequence generated by Algorithm \ref{A1}. Then, we have
	\begin{itemize}
		\item[$\mathrm{(i)}$] there exists $F^{*}\preceq_{C}F(x^{k})$ for all $k$ such that $\lim\limits_{k\rightarrow\infty}F(x^{k})=F^{*}$.
		\item[$\mathrm{(ii)}$] $\sum\limits_{k=0}^{\infty}D_{\omega}(x^{k+1},x^{k})<\infty$.
		\item[$\mathrm{(iii)}$] $\{x_{k}\}$ has at least one accumulation point, and every accumulation point $x^{*}\in\mathrm{int}(\Omega)$ is a $C$-stationary point. Moreover, if $\nabla\omega(\cdot)$ is $L_{\omega}$-Lipschitz continuous on $\mathrm{int}(\Omega)$, every accumulation point $x^{*}\in\mathrm{bd}(\Omega)$ is a $C$-stationary point.
		\item[$\mathrm{(iv)}$] $\min\limits_{0\leq p\leq k-1}v_{\ell}(x^{p})\leq\frac{-\ell\varphi(F^{*}-F(x^{0}))}
		{\alpha(\omega)((1+\alpha(\omega))\ell-L)k}$
	\end{itemize}
\end{theo}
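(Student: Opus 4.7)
The plan builds everything from the sufficient descent of Lemma \ref{l1}. First I would rewrite that inequality in terms of the concave, superadditive function $\psi(y):=\min_{c^{*}\in G}\langle c^{*}, y\rangle$, which by the bipolar theorem applied to $G$ satisfies $\psi(y)\geq 0\iff y\in C$. Setting $\beta:=(1+\alpha(\omega))\ell-L>0$, Lemma \ref{l1} reads
\[
\psi(F(x^{k})-F(x^{k+1}))\;\geq\;\beta\,D_{\omega}(x^{k+1},x^{k})\;\geq\;0,
\]
so the iteration is automatically $C$-monotone: $F(x^{k+1})\preceq_{C}F(x^{k})$.

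For (i), $C$-monotonicity together with the bounded level-set hypothesis confines $\{x^{k}\}$ to a compact set. For every $c^{*}\in C^{*}$, $\langle c^{*},F(x^{k})\rangle$ is then a decreasing real sequence bounded below, hence convergent. Since $C$ is pointed, $C^{*}$ has nonempty interior and spans $\mathbb{R}^{m}$; picking a basis inside $C^{*}$ forces $F(x^{k})\to F^{*}$, and closedness of $C$ yields $F^{*}\preceq_{C}F(x^{k})$. For (ii), I would telescope the descent inequality and exploit superadditivity of $\psi$:
\[
\beta\sum_{k=0}^{N-1}D_{\omega}(x^{k+1},x^{k})\;\leq\;\sum_{k=0}^{N-1}\psi(F(x^{k})-F(x^{k+1}))\;\leq\;\psi(F(x^{0})-F(x^{N})),
\]
then let $N\to\infty$ and invoke (i). For (iv), the symmetry coefficient gives $D_{\omega}(x^{k},x^{k+1})\leq D_{\omega}(x^{k+1},x^{k})/\alpha(\omega)$, so Proposition \ref{p1}(iv) combined with the descent yields
\[
v_{\ell}(x^{k})\;=\;\ell D_{\omega}(x^{k},x^{k+1})\;\leq\;\frac{\ell}{\alpha(\omega)\beta}\,\psi(F(x^{k})-F(x^{k+1})).
\]
Summing, telescoping via superadditivity, using $-\varphi(F^{*}-F(x^{0}))=\psi(F(x^{0})-F^{*})$, and bounding $\min$ by the average over $k$ terms produces (iv).

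For (iii), boundedness supplies at least one accumulation point. Given $x^{*}\in\mathrm{int}(\Omega)$ with $x^{k_{j}}\to x^{*}$, part (ii) forces $D_{\omega}(x^{k+1},x^{k})\to 0$; strict convexity of $\omega$ plus continuity of $D_{\omega}(\cdot,x^{*})$ on $\Omega$ then forces $x^{k_{j}+1}\to x^{*}$ as well, since any other limit point $y^{*}$ would satisfy $D_{\omega}(y^{*},x^{*})=0$ and therefore $y^{*}=x^{*}$. Continuity of $\omega$ and $\nabla\omega$ at the interior point $x^{*}$ then gives $v_{\ell}(x^{k_{j}})=\ell D_{\omega}(x^{k_{j}},x^{k_{j}+1})\to 0$, and continuity of $v_{\ell}$ together with Theorem \ref{t1}(ii) identifies $x^{*}$ as $C$-stationary. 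The main obstacle is the boundary case: for a Legendre $\omega$ one generally has $\|\nabla\omega(x)\|\to\infty$ as $x\to\mathrm{bd}(\Omega)$, which blocks the continuity step above. The extra assumption that $\nabla\omega$ is $L_{\omega}$-Lipschitz on $\mathrm{int}(\Omega)$ is precisely what rules this out: it forces $\nabla\omega$ (and hence $\omega$) to extend continuously to $\Omega$, after which the same strict-convexity-plus-continuity argument goes through at boundary accumulation points.
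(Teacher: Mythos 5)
Your treatment of (i), (ii) and (iv) is essentially the paper's own argument in dual notation: your $\psi(y)=\min_{c^{*}\in G}\langle c^{*},y\rangle$ is just $-\varphi(-y)$, and telescoping the sufficient descent inequality via superadditivity of $\psi$, bounding through $F^{*}\preceq_{C}F(x^{k})$, and passing from $D_{\omega}(x^{k},x^{k+1})$ to $D_{\omega}(x^{k+1},x^{k})$ via $\alpha(\omega)$ is exactly what the paper does. Your interior case of (iii) is also sound (and slightly different in detail: you recover $v_{\ell}(x^{k_j})\to 0$ through joint continuity of $D_{\omega}$ near the interior point rather than through the $\alpha(\omega)$ bound, then invoke continuity of $v_{\ell}$ and Theorem \ref{t1}(ii), which is the paper's route).

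The boundary case of (iii), however, is a genuine gap. You assert that $L_{\omega}$-Lipschitz continuity of $\nabla\omega$ lets $\nabla\omega$ (hence $\omega$) extend continuously to $\Omega$, ``after which the same strict-convexity-plus-continuity argument goes through.'' But the ingredients of that argument are not available at $x^{*}\in\mathrm{bd}(\Omega)$: the continuity of $v_{\ell}(\cdot)$ and $V_{\ell}(\cdot)$ is proved only on $\mathrm{int}(\Omega)$ (the proof uses the optimality condition through $\nabla\omega(V_{\ell}(x))$ and $\mathrm{dom}\,\nabla\omega=\mathrm{int}(\Omega)$; the paper explicitly remarks after this theorem that continuity cannot be derived at boundary points), and Theorem \ref{t1}(ii), the equivalence $v_{\ell}(x)=0\Leftrightarrow x$ stationary, is stated only for $x\in\mathrm{int}(\Omega)$. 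A continuous extension of $\nabla\omega$ does not by itself repair either of these, and you give no substitute argument. The paper handles this case directly: assuming $x^{*}\in\mathrm{bd}(\Omega)$ is non-stationary, it picks $y_{0}\in\mathrm{int}(\Omega)$ with $\varepsilon:=\varphi\bigl(JF(x^{*})(y_{0}-x^{*})\bigr)<0$, uses continuity of $JF$ to get $\varphi\bigl(JF(x^{k})(y_{0}-x^{k})\bigr)\le\varepsilon/2$ along the subsequence, and uses the Lipschitz hypothesis only to obtain the uniform quadratic bound $D_{\omega}\bigl(x^{k}+t(y_{0}-x^{k}),x^{k}\bigr)\le\tfrac{L_{\omega}}{2}t^{2}\|y_{0}-x^{k}\|^{2}$, whence $v_{\ell}(x^{k})\ge-\tfrac{\varepsilon}{2}t-\ell L_{\omega}t^{2}\|y_{0}-x^{*}\|^{2}$ for all small $t$; fixing a small $t_{0}$ keeps $v_{\ell}(x^{k})$ bounded away from $0$, contradicting $v_{\ell}(x^{k})\to0$. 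That quantitative lower bound on $v_{\ell}$ near the boundary is the missing idea in your proposal. (Your own observation that essential smoothness forces $\|\nabla\omega\|\to\infty$ at $\mathrm{bd}(\mathrm{dom}\,\omega)$ in fact shows the Lipschitz hypothesis is in tension with the Legendre property whenever the boundary is nonempty, which is all the more reason the boundary case needs an explicit argument rather than an appeal to ``the same argument goes through.'')
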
 
\begin{proof}
	(i) From Lemma \ref{l1}, it follows that $\{F(x_{k})\}$ is descreasing under the partial order induced by $C$. This combined with boundness of $\{x:F(x)\preceq_{C} F(x^{0})\}$ implies $\{x_{k}\}$ is bounded and there exists $F^{*}\preceq_{C}F(x^{k})$ for all $k$ such that $\lim\limits_{k\rightarrow\infty}F(x^{k})=F^{*}$.
	\par (ii) Summing the inequality (\ref{e5}) over $k=0,1,...p$, we obtain
	\begin{align*}
	-\sum\limits_{k=0}^{p}((1+\alpha(\omega))\ell-L)D_{\omega}(x^{k+1},x^{k})&\geq\sum\limits_{k=0}^{p}	\varphi(F(x^{k+1})-F(x^{k}))\\
	&\geq\varphi(\sum\limits_{k=0}^{p}\{F(x^{k+1})-F(x^{k})\})\\
	&\geq\varphi(F^{*}-F(x^{0}))
	\end{align*}
where the second inequality follows from the subadditiveness of $\varphi(\cdot)$, and the last inequality is given by the definition of $\varphi(\cdot)$ and $F^{*}\preceq_{C}F(x^{k})$. Hence
$$\sum\limits_{k=0}^{\infty}D_{\omega}(x^{k+1},x^{k})<\infty$$
\par (iii) From Proposition \ref{p1}(iv), we have
\begin{equation}\label{e6}
	v_{\ell}(x^{k})=\ell D_{\omega}(x^{k},x^{k+1})\leq\frac{\ell}{\alpha(\omega)}D_{\omega}(x^{k+1},x^{k}),
\end{equation} 
this together with assertion (ii) imply 
$$\lim\limits_{k\rightarrow\infty}v_{\ell}(x^{k})=0.$$
On the other hand, the boundness of $\{x^{k}\}$ yields that there exists a subsequence $\{x^{k}\}_{k\in\mathcal{K}}\subset\{x^{k}\}$ tends to $x^{*}$, where $x^{*}$ is an accumulation point. Next, we prove $x^{*}$ is a $C$-stationary point. We distinguish two cases: $x^{*}\in\mathrm{int}(\Omega)$ or $x^{*}\in\mathrm{bd}(\Omega)$. If $x^{*}\in\mathrm{int}(\Omega)$, it follows from the continuity of $v_{\ell}(\cdot)$ that
$$v_{\ell}(x^{*})=\lim\limits_{k\in\mathcal{K}}v_{\ell}(x^{k})=0.$$
Therefore, $x^{*}$ is a $C$-stationary point. If  $x^{*}\in\mathrm{bd}(\Omega)$, suppose by contrary that $x^{*}$ is a non-stationary point. From Remark \ref{rdd}, there exists $y_{0}\in\mathrm{int}(\Omega)$ such that 
$$JF(x^{*})(y_{0}-x^{*})\in-\mathrm{int}(C).$$
Then, we denote by 
$$\varepsilon:=\varphi(JF(x^{*})(y_{0}-x^{*}))<0.$$
From the continuity of $JF(\cdot)$ and the compactness of $G$, we have 
$$\varphi(JF(x^{k})(y_{0}-x^{k}))\leq\frac{\varepsilon}{2},$$
for $k\in\mathcal{K}$ is sufficient large. Then, we have
\begin{align*}
	v_{\ell}(x^{k}) &= - \inf\limits_{y\in\Omega}\max\limits_{c^{*}\in G}\{\langle c^{*},JF(x^{k})(y-x^{k})\rangle+\ell D_{\omega}(y,x^{k})\}\\
	&\geq-t\varphi(JF(x^{k})(y_{0}-x^{k}))-\ell D_{\omega}(x^{k}+t(y_{0}-x^{k}),x^{k})\\
	&\geq-\frac{\varepsilon}{2}t-\ell D_{\omega}(x^{k}+t(y_{0}-x^{k}),x^{k})\\
	&\geq-\frac{\varepsilon}{2}t-\frac{\ell L_{\omega}}{2}t^{2}\|y_{0}-x^{k}\|^{2}\\
	&\geq-\frac{\varepsilon}{2}t-\ell L_{\omega}t^{2}\|y_{0}-x^{*}\|^{2},
\end{align*}
for $k\in\mathcal{K}$ is sufficient large and all $t\in(0,1)$, where the first inequality follows from $x^{k}+t(y_{0}-x^{k})\in\Omega$, the third inequality is given by $L_{\omega}$-Lipschitz continuity of $\nabla\omega(\cdot)$, and the last inequality is due to that $k\in\mathcal{K}$ is sufficient large so that, without loss of generality, we have $\|y_{0}-x^{k}\|\leq\sqrt{2}\|y_{0}-x^{*}\|$. Recall that the above inequalities holds for all $t\in(0,1)$, then there exists $t_{0}\in(0,1)$ such that $v_{\ell}(x^{k})\geq-\frac{\varepsilon t_{0}}{4}$. This contradicts $v_{\ell}(x^{k})\rightarrow0$.
Therefore, $x^{*}$ is a $C$-stationary point.
\par (iv) In the proof of assertion (ii), we obtained
$$\sum\limits_{k=0}^{\infty}D_{\omega}(x^{k+1},x^{k})\leq\frac{-\varphi(F^{*}-F(x^{0}))}
{(1+\alpha(\omega))\ell-L},$$
this combines with (\ref{e6}) yield
$$\sum\limits_{k=0}^{\infty}v_{\ell}(x^{k})\leq\frac{-\ell\varphi(F^{*}-F(x^{0}))}
{\alpha(\omega)((1+\alpha(\omega))\ell-L)}.$$
Then, assertion (iv) holds due to 
$$\sum\limits_{k=0}^{\infty}v_{\ell}(x^{k})\geq\sum\limits_{p=0}^{k-1}v_{\ell}(x^{p})\geq k\min\limits_{0\leq p\leq k-1}v_{\ell}(x^{p}).$$$\ \ \square$
\end{proof}
\begin{rem}
	In Theorem \ref{t2}(iii), if the accumulation $x^{*}$ lies in $\mathrm{bd}(\Omega)$, the continuity of $v_{\ell}(\cdot)$ and $V_{\ell}(\cdot)$ can not be derived due to $\mathrm{dom}\nabla\omega=\mathrm{int}(\Omega)$. Recall that when $\omega(\cdot)=\frac{1}{2}\|\cdot\|^{2}$, the continuity of $d(\cdot)$, corresponds to $V_{\ell}(\cdot)$, implied the stationarity of accumulation points \cite{6,vsd}. Alternatively, since the gradient of $\frac{1}{2}\|\cdot\|^{2}$ is $1$-Lipschitz continuous, the stationarity of accumulation points follows directly from a subsequence $\{d(x^{k})\}_{k\in\mathcal{K}}\rightarrow0$. The result is crucial when the continuity of $v_{\ell}(\cdot)$ and $V_{\ell}(\cdot)$ are unknown. For example, the authors used but didn't prove the continuity of $d(\cdot)$ in \cite[Theorem 3.2]{dsd}. However, the result is valid, which can be proved by $\{d(x^{k})\}_{k\in\mathcal{K}}\rightarrow0$.
\end{rem}
\subsection{Strong convergence}
In this subsection, we discuss the strong convergence of Algorithm \ref{A1} in convex case. Firstly, we present the following identity, known as three points lemma. 
\begin{lem}[Three points lemma]{\rm\cite{tp}}
	Assume that $a,b,c\in\mathrm{int(dom}\omega)$, then the following equality holds:
	$$\langle\nabla\omega(b)-\nabla\omega(a),c-a\rangle=D_{\omega}(c,a)+D_{\omega}(a,b)-D_{\omega}(c,b).$$
\end{lem}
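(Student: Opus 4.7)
The plan is to establish the identity by a direct algebraic expansion, using the definition of the Bregman distance three times and then cancelling terms. Since each $D_{\omega}$ is built from $\omega$ evaluated at two points plus a single linear correction $\langle\nabla\omega(\cdot),\cdot\rangle$, all six $\omega$-values on the right-hand side will pair off, leaving only an inner-product expression that rearranges to match the left-hand side.

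Concretely, I would first write out
\begin{align*}
D_{\omega}(c,a) &= \omega(c)-\omega(a)-\langle\nabla\omega(a),c-a\rangle,\\
D_{\omega}(a,b) &= \omega(a)-\omega(b)-\langle\nabla\omega(b),a-b\rangle,\\
D_{\omega}(c,b) &= \omega(c)-\omega(b)-\langle\nabla\omega(b),c-b\rangle,
\end{align*}
all of which are well-defined since $a,b,c\in\mathrm{int(dom}\omega)$ and $\omega$ is differentiable there by the Legendre assumption. Then I would form $D_{\omega}(c,a)+D_{\omega}(a,b)-D_{\omega}(c,b)$ and observe that $\omega(c)$ cancels between the first and third terms, while $\omega(a)$ and $\omega(b)$ each appear with opposite signs and cancel as well.

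What remains is $-\langle\nabla\omega(a),c-a\rangle-\langle\nabla\omega(b),a-b\rangle+\langle\nabla\omega(b),c-b\rangle$. Grouping the two $\nabla\omega(b)$ contributions gives $\langle\nabla\omega(b),(c-b)-(a-b)\rangle=\langle\nabla\omega(b),c-a\rangle$, so the expression collapses to $\langle\nabla\omega(b)-\nabla\omega(a),c-a\rangle$, which is exactly the claim.

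There is no genuine obstacle here — the statement is a linearity identity and the only thing to watch is sign bookkeeping when grouping the $\nabla\omega(b)$ terms (the subtraction $-D_{\omega}(c,b)$ flips the sign of $\langle\nabla\omega(b),c-b\rangle$ so that it combines with $-\langle\nabla\omega(b),a-b\rangle$ rather than cancelling it). No convexity, smoothness, or Legendre structure beyond differentiability at the three points is needed for the identity itself; those hypotheses are only used implicitly to guarantee that $\nabla\omega(a)$ and $\nabla\omega(b)$ exist, which is why the domain restriction $a,b,c\in\mathrm{int(dom}\omega)$ appears in the statement.
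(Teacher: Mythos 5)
Your proof is correct: the identity follows exactly as you compute, by expanding the three Bregman distances from the definition and cancelling the $\omega$-values, and you are right that only differentiability of $\omega$ at $a$ and $b$ (guaranteed by $a,b\in\mathrm{int(dom}\,\omega)$ and the Legendre assumption via Lemma 2(iv)) is needed. The paper itself gives no proof — it cites the lemma from Chen and Teboulle — and your direct algebraic verification is precisely the standard argument behind that citation, so there is nothing to reconcile.
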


Applying the three points lemma, we can now establish a fundamental inequality for Algorithm \ref{A1} with $C$-convex objective function.

\begin{lem}
	Assume that $F(\cdot)$ is $(L,C,e)$-smooth and $(\mu,C,e)$-strongly convex relative to $\omega(\cdot)$ for some $L>0$ and $\mu\geq0$. Let $x^{k+1}=V_{\ell}(x^{k})$ for some $\ell>0$. Then, for all $x\in\mathrm{int}(\Omega)$, we have
	\begin{equation}\label{e7}
		\langle c_{\ell}(x^{k}),F(x^{k+1})-F(x)\rangle\leq(\ell-\delta\mu)D_{\omega}(x,x^{k})-\ell D_{\omega}(x,x^{k+1})+(L-\ell)D_{\omega}(x^{k+1},x^{k}).
	\end{equation}
\end{lem}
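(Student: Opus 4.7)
The plan is to derive the inequality by combining the two defining vector inequalities of relative smoothness and relative strong convexity, then eliminating the Jacobian term via the optimality condition (\ref{e4}) together with the three points lemma.

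First, I would write down the two structural inequalities at the point $x^k$. By $(L,C,e)$-smoothness,
\begin{equation*}
F(x^{k+1}) \preceq_{C} F(x^{k}) + JF(x^{k})(x^{k+1}-x^{k}) + L D_{\omega}(x^{k+1},x^{k})\,e,
\end{equation*}
and by $(\mu,C,e)$-strong convexity (applied with the arbitrary $x\in\mathrm{int}(\Omega)$ playing the role of $y$),
\begin{equation*}
F(x^{k}) + JF(x^{k})(x-x^{k}) + \mu D_{\omega}(x,x^{k})\,e \preceq_{C} F(x).
\end{equation*}
Subtracting the second inequality from the first within the partial order yields
\begin{equation*}
F(x^{k+1}) - F(x) \preceq_{C} JF(x^{k})(x^{k+1}-x) + L D_{\omega}(x^{k+1},x^{k})\,e - \mu D_{\omega}(x,x^{k})\,e.
\end{equation*}

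Next, I would pair both sides with $c_{\ell}(x^{k})\in G\subset C^{*}$, which preserves the inequality direction. The two scalar coefficients in front of $e$ are controlled by the normalization constants: by (\ref{max}), $L\langle c_{\ell}(x^{k}),e\rangle \leq L$, and by (\ref{min}), $-\mu\langle c_{\ell}(x^{k}),e\rangle \leq -\delta\mu$. This gives
\begin{equation*}
\langle c_{\ell}(x^{k}), F(x^{k+1})-F(x)\rangle \leq \langle c_{\ell}(x^{k}), JF(x^{k})(x^{k+1}-x)\rangle + L D_{\omega}(x^{k+1},x^{k}) - \delta\mu D_{\omega}(x,x^{k}).
\end{equation*}

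Now I would process the remaining linear term. The optimality condition (\ref{e4}) rewrites as
\begin{equation*}
JF(x^{k})^{T} c_{\ell}(x^{k}) = \ell\bigl(\nabla\omega(x^{k}) - \nabla\omega(x^{k+1})\bigr),
\end{equation*}
so that $\langle c_{\ell}(x^{k}), JF(x^{k})(x^{k+1}-x)\rangle = \ell\langle \nabla\omega(x^{k}) - \nabla\omega(x^{k+1}), x^{k+1}-x\rangle$. Applying the three points lemma with $a=x^{k+1}$, $b=x^{k}$, $c=x$ gives
\begin{equation*}
\langle \nabla\omega(x^{k}) - \nabla\omega(x^{k+1}), x - x^{k+1}\rangle = D_{\omega}(x,x^{k+1}) + D_{\omega}(x^{k+1},x^{k}) - D_{\omega}(x,x^{k}),
\end{equation*}
and negating yields $\langle c_{\ell}(x^{k}), JF(x^{k})(x^{k+1}-x)\rangle = \ell D_{\omega}(x,x^{k}) - \ell D_{\omega}(x,x^{k+1}) - \ell D_{\omega}(x^{k+1},x^{k})$. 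Substituting this expression into the pairing inequality and collecting the coefficients of $D_{\omega}(x,x^{k})$ and $D_{\omega}(x^{k+1},x^{k})$ produces the claimed (\ref{e7}).

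The only subtlety is keeping track of the order of arguments in the three points lemma so that the Bregman terms land with the right signs, and observing that the bounds (\ref{max}) and (\ref{min}) apply to the scalars multiplying $e$ because $c_{\ell}(x^{k})\in G$ and both $D_{\omega}(x^{k+1},x^{k})\geq0$, $D_{\omega}(x,x^{k})\geq0$. No further assumption beyond those already in force is needed.
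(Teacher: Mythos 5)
Your proof is correct and follows essentially the same route as the paper: combine the relative smoothness and relative strong convexity inequalities, pair with $c_{\ell}(x^{k})\in G$ using the normalizations (\ref{max}) and (\ref{min}), and convert the Jacobian term into Bregman distances via the optimality condition (\ref{e4}) and the three points lemma. The only (harmless) difference is that you treat $\langle c_{\ell}(x^{k}),JF(x^{k})(x^{k+1}-x)\rangle$ in one stroke, while the paper splits it as $(x^{k+1}-x^{k})+(x^{k}-x)$ and additionally invokes Proposition \ref{p1}(iii); the algebra is equivalent.
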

\begin{proof}
	From the $(L,C,e)$-smoothness of $F(\cdot)$, for all $x\in\mathrm{int}(\Omega)$, we have
	\begin{align*}
	F(x^{k+1})&\preceq_{C}F(x^{k})+JF(x^{k})(x^{k+1}-x^{k})+LD_{\omega}(x^{k+1},x^{k})e\\
	&\preceq_{C}F(x)+JF(x^{k})(x^{k+1}-x)-\mu D_{\omega}(x,x^{k})e+LD_{\omega}(x^{k+1},x^{k})e,
	\end{align*}
where the second inequality is given by the $(\mu,C,e)$-strong convexity of $F(\cdot)$. Rearranging and multiplying by $c_{\ell}(x^{k})$, we obtain
	\begin{align*}
	&\langle c_{\ell}(x^{k}), F(x^{k+1})-F(x)\rangle\\
	\leq&\langle c_{\ell}(x^{k}),JF(x^{k})(x^{k+1}-x)\rangle-\delta\mu D_{\omega}(x,x^{k})+LD_{\omega}(x^{k+1},x^{k})\\
	\leq& \langle c_{\ell}(x^{k}),JF(x^{k})(x^{k+1}-x^{k})\rangle+\langle c_{\ell}(x^{k}),JF(x^{k})(x^{k}-x)\rangle-\delta\mu D_{\omega}(x,x^{k})+LD_{\omega}(x^{k+1},x^{k})\\
	=&-\ell(D_{\omega}(x^{k+1},x^{k})+D_{\omega}(x^{k},x^{k+1})) + \ell\langle\nabla\omega(x^{k})-\nabla\omega(x^{k+1}),x^{k}-x\rangle-\delta\mu D_{\omega}(x,x^{k})+LD_{\omega}(x^{k+1},x^{k})\\
	=&-\ell(D_{\omega}(x^{k+1},x^{k})+D_{\omega}(x^{k},x^{k+1}))+\ell(D_{\omega}(x,x^{k})+D_{\omega}(x^{k},x^{k+1})-D_{\omega}(x,x^{k+1}))\\
	&-\delta\mu D_{\omega}(x,x^{k})+LD_{\omega}(x^{k+1},x^{k})\\
	=&(\ell-\delta\mu)D_{\omega}(x,x^{k})-\ell D_{\omega}(x,x^{k+1})+(L-\ell)D_{\omega}(x^{k+1},x^{k}),
\end{align*}
where the first equality follows from Proposition \ref{p1}(iii) and equality (\ref{e4}), the second equality is given by three points lemma with $a=x^{k}$, $b=x^{k+1}$ and $c=x$.$\ \ \square$
\end{proof}

Before presenting the convergence result in convex case, we recall the following result on nonnegative sequences.
\begin{lem}\label{l4}{\rm\cite[Lemma 2]{qf}}
	Let $\{a^{k}\}$ and $\{b^{k}\}$ be nonnegative sequences. Assume that $\sum\limits_{k=1}^{\infty}b^{k}<\infty$ and 
	$$a^{k+1}\leq a^{k}+b^{k}.$$
	Then, $\{a^{k}\}$ converges.
\end{lem}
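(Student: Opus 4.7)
The plan is to reduce to the classical monotone convergence theorem by subtracting off the summable perturbation to build an auxiliary sequence that is monotone. Specifically, define
$$c^{k} := a^{k} - \sum_{j=0}^{k-1} b^{j},$$
with the convention $c^{0} := a^{0}$ (empty sum equals zero). The idea is that $c^{k}$ absorbs all the "slack" contributed by the $b^{j}$'s, so that the inequality $a^{k+1} \leq a^{k} + b^{k}$ collapses to a clean monotone statement about $c^{k}$.

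The first step is to substitute the definition of $c^{k}$ and $c^{k+1}$ into the hypothesis $a^{k+1} \leq a^{k} + b^{k}$ and cancel the partial sums; this yields directly $c^{k+1} \leq c^{k}$, so $\{c^{k}\}$ is non-increasing. The second step is to observe that, because $a^{k} \geq 0$ and the partial sums $S_{k} := \sum_{j=0}^{k-1} b^{j}$ are uniformly bounded above by the finite total $S := \sum_{j=0}^{\infty} b^{j}$, we get $c^{k} = a^{k} - S_{k} \geq -S$, giving a uniform lower bound. Monotone convergence then delivers that $\{c^{k}\}$ converges to some finite limit.

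The third and final step is a one-line assembly: since the hypothesis $\sum_{j} b^{j} < \infty$ already guarantees $S_{k} \to S$, the identity $a^{k} = c^{k} + S_{k}$ exhibits $\{a^{k}\}$ as the sum of two convergent sequences, hence convergent.

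There is no genuinely hard step in this argument, which is why the paper merely cites the result. The only subtlety worth flagging is that the nonnegativity hypothesis on $\{a^{k}\}$ is used precisely to secure the lower bound $c^{k} \geq -S$; without it, the monotone auxiliary sequence could drift to $-\infty$ and the argument would collapse. Correspondingly, the summability of $\{b^{k}\}$ is used twice: once (as an upper bound on $S_{k}$) to bound $c^{k}$ from below, and again (as convergence of $S_{k}$) to transfer convergence back from $c^{k}$ to $a^{k}$.
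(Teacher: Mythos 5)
Your proof is correct: the auxiliary sequence $c^{k}=a^{k}-\sum_{j=0}^{k-1}b^{j}$ is non-increasing by the hypothesis and bounded below by $-\sum_{j}b^{j}$ thanks to $a^{k}\geq 0$, so it converges, and adding back the convergent partial sums recovers convergence of $\{a^{k}\}$. The paper gives no proof of this lemma at all (it is quoted from Polyak's book), and your argument is exactly the standard one behind that cited result, so there is nothing to reconcile; the only cosmetic point is the harmless index mismatch between the sum starting at $k=1$ in the statement and at $j=0$ in your partial sums, which changes nothing since $b^{0}$ is a single finite term.
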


We also recall the following assumption.
\begin{assumption}\cite[Assumption H]{lc}\label{a2}
	Assume the Legendre function $\omega(\cdot)$ satisfies
	\begin{itemize}
		\item[$\mathrm{(i)}$] If $\{x^{k}\}\subset\mathrm{int}(\Omega)$ converges to some point $x\in\Omega$, then $D_{\omega}(x,x^{k})\rightarrow0$.
		\item[$\mathrm{(ii)}$] Reciprocally, if $x\in\Omega$ and if $\{x^{k}\}\subset\mathrm{int}(\Omega)$ is such that $D_{\omega}(x,x^{k})\rightarrow0$, then $x^{k}\rightarrow x$.
	\end{itemize}
\end{assumption}
\begin{rem}
	Note that $\omega(\cdot)$ is differentiable on $\mathrm{int}(\Omega)$, Assumption \ref{a2}(i) holds directly for $x\in\mathrm{int}(\Omega)$. On the other hand, if $x\in\mathrm{int}(\Omega)$, Assumption \ref{a2}(ii) follows from the strict convexity of $\omega(\cdot)$ on $\mathrm{int}(\Omega)$. Hence, Assumption \ref{a2} is proposed for $x\in\mathrm{bd}(\Omega)$. 
\end{rem}

In what follows, we give the convergence result for convex case.
\begin{theo}\label{t3}
	The assumptions of Theorem \ref{t2} hold and $F(\cdot)$ is $C$-convex. Then, we have
	\begin{itemize}
		\item[$\mathrm{(i)}$] there exists $F^{*}\preceq_{C}F(x^{k})$ for all $k$ such that $\lim\limits_{k\rightarrow\infty}F(x^{k})=F^{*}$.
		\item[$\mathrm{(ii)}$] $\sum\limits_{k=0}^{\infty}D_{\omega}(x^{k+1},x^{k})<\infty$.
		\item[$\mathrm{(iii)}$] every accumulation point of $\{x_{k}\}$ is a weakly efficient solution. Assume that Assumption \ref{a2} holds, then $\{x_{k}\}$ converges to some weakly efficient solution $x^{*}$.
		\item[$\mathrm{(iv)}$] $\langle \bar{c}_{\ell}^{k-1},F(x^{k})-F^{*}\rangle\leq \frac{\ell D_{\omega}(x^{*},x^{0})+(L-\ell)\sum\limits_{p=0}^{k-1}D_{\omega}(x^{p+1},x^{p})}{k}$, where $\bar{c}_{\ell}^{k-1}=\frac{1}{k}\sum\limits_{p=0}^{k-1}c_{\ell}(x^{p})$.
		\item[$\mathrm{(v)}$] if $\ell\geq L$, then $\langle \bar{c}_{\ell}^{k-1},F(x^{k})-F^{*}\rangle\leq \frac{\ell D_{\omega}(x^{*},x^{0})}{k}$, where $\bar{c}_{\ell}^{k-1}=\frac{1}{k}\sum\limits_{p=0}^{k-1}c_{\ell}(x^{p})$.
	\end{itemize}
\end{theo}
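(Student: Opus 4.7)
Parts (i) and (ii) of the claim are inherited verbatim from Theorem~\ref{t2}, since the hypotheses of Theorem~\ref{t3} contain those of Theorem~\ref{t2}. The common lever for (iii), (iv), and (v) is the fundamental inequality~\eqref{e7} specialized to $\mu = 0$, namely
$$\langle c_{\ell}(x^{k}), F(x^{k+1}) - F(x)\rangle \leq \ell D_{\omega}(x, x^{k}) - \ell D_{\omega}(x, x^{k+1}) + (L-\ell) D_{\omega}(x^{k+1}, x^{k}),$$
instantiated at a weakly efficient accumulation point $x^{*}$. Such an $x^{*}$ is produced by Theorem~\ref{t2}(iii), which gives a $C$-stationary accumulation point; in the $C$-convex setting this is equivalent to being efficient, hence weakly efficient. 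This already settles the first half of (iii).

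To prove (iv), I would sum the above inequality at $x = x^{*}$ over $p = 0,\dots,k-1$. The right side telescopes on the $D_{\omega}(x^{*}, \cdot)$ terms and, after discarding the nonpositive contribution $-\ell D_{\omega}(x^{*}, x^{k})$, becomes $\ell D_{\omega}(x^{*}, x^{0}) + (L-\ell) \sum_{p=0}^{k-1} D_{\omega}(x^{p+1}, x^{p})$. To make the left side recognizable, I would use the $C$-monotone descent $F(x^{p+1}) \succeq_{C} F(x^{k})$ for $p \leq k-1$ (from Lemma~\ref{l1} and part (i)) paired with $c_{\ell}(x^{p}) \in G \subset C^{*}$, giving $\langle c_{\ell}(x^{p}), F(x^{p+1}) - F^{*}\rangle \geq \langle c_{\ell}(x^{p}), F(x^{k}) - F^{*}\rangle$. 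Summing yields $k \langle \bar{c}_{\ell}^{k-1}, F(x^{k}) - F^{*}\rangle$ on the left, and dividing by $k$ is (iv). Part (v) is immediate, since $\ell \geq L$ makes the extra term $(L-\ell)\sum D_{\omega}(x^{p+1}, x^{p})$ nonpositive and thus discardable.

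For the full-sequence convergence in (iii), I would apply~\eqref{e7} once more at $x = x^{*}$, now at a single step. Since $F^{*} \preceq_{C} F(x^{k+1})$ and $c_{\ell}(x^{k}) \in C^{*}$, the left side is nonnegative, so
$$D_{\omega}(x^{*}, x^{k+1}) \leq D_{\omega}(x^{*}, x^{k}) + \tfrac{L-\ell}{\ell}\, D_{\omega}(x^{k+1}, x^{k}).$$
If $\ell \geq L$ the sequence $\{D_{\omega}(x^{*}, x^{k})\}$ is directly monotone nonincreasing; otherwise Lemma~\ref{l4} applied with $a^{k} = D_{\omega}(x^{*}, x^{k})$ and $b^{k} = \tfrac{L-\ell}{\ell} D_{\omega}(x^{k+1}, x^{k})$ (summable by (ii)) shows $\{D_{\omega}(x^{*}, x^{k})\}$ converges. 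Along the subsequence $x^{k_{j}} \to x^{*}$, Assumption~\ref{a2}(i) forces the limit to be $0$, and Assumption~\ref{a2}(ii) then upgrades this to $x^{k} \to x^{*}$.

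The one genuine delicacy is that~\eqref{e7} was derived for $x \in \mathrm{int}(\Omega)$ while $x^{*}$ may lie on $\mathrm{bd}(\Omega)$; this is harmless because both $D_{\omega}(x, x^{k})$ and $D_{\omega}(x, x^{k+1})$ depend only on $\nabla \omega$ at the interior iterates $x^{k}, x^{k+1}$, and are continuous in $x$ over $\Omega$ under Assumption~\ref{a2}(i). Passing to limits along interior approximants of $x^{*}$ in the inequality therefore preserves all the estimates used above, so no new technicality intervenes.
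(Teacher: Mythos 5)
Most of your argument coincides with the paper's: parts (i)--(ii) are inherited from Theorem~\ref{t2}; parts (iv)--(v) are obtained exactly as in the paper by summing inequality~\eqref{e7} with $\mu=0$ at $x=x^{*}$, using the $C$-monotonicity $F(x^{p+1})\succeq_{C}F(x^{k})$ and $c_{\ell}(x^{p})\in G\subset C^{*}$, and dividing by $k$; the full-sequence convergence in (iii) via the quasi-Fej\'er inequality $D_{\omega}(x^{*},x^{k+1})\leq D_{\omega}(x^{*},x^{k})+\frac{L-\ell}{\ell}D_{\omega}(x^{k+1},x^{k})$, Lemma~\ref{l4} and Assumption~\ref{a2} is also the paper's argument (your explicit case split on the sign of $L-\ell$ is a small improvement, since Lemma~\ref{l4} needs nonnegative $b^{k}$).

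The genuine gap is in your proof of the first half of (iii). You claim Theorem~\ref{t2}(iii) ``gives a $C$-stationary accumulation point'' and then invoke the stationarity--efficiency equivalence under $C$-convexity. But Theorem~\ref{t2}(iii) guarantees stationarity only for accumulation points lying in $\mathrm{int}(\Omega)$; for accumulation points on $\mathrm{bd}(\Omega)$ it requires the additional hypothesis that $\nabla\omega$ is $L_{\omega}$-Lipschitz on $\mathrm{int}(\Omega)$, which is \emph{not} among the assumptions of Theorem~\ref{t2} (it is an extra condition inside that statement) and is not assumed in Theorem~\ref{t3}. Since $\mathrm{cl}(\mathrm{dom}\,\omega)=\Omega$ and the method is an interior-point scheme, boundary accumulation points are precisely the typical and delicate case, so your route does not establish weak efficiency there. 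The paper avoids stationarity altogether: any accumulation point $x^{*}$ satisfies $F(x^{*})=F^{*}$ by (i), and if some $\hat{x}\in\Omega$ had $F(\hat{x})\prec_{C}F(x^{*})$, then summing~\eqref{e7} (with $\mu=0$, $x=\hat{x}$) and averaging gives $\langle\frac{1}{k}\sum_{p=0}^{k-1}c_{\ell}(x^{p}),F(x^{*})-F(\hat{x})\rangle\leq\frac{\ell D_{\omega}(\hat{x},x^{0})+(L-\ell)\sum_{p=0}^{k-1}D_{\omega}(x^{p+1},x^{p})}{k}\rightarrow 0$, while the left side stays above $\min_{c^{*}\in G}\langle c^{*},F(x^{*})-F(\hat{x})\rangle>0$, a contradiction. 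You already have all the ingredients for this fix: it is exactly your (iv)-type summation run with an arbitrary comparison point $\hat{x}$ instead of $x^{*}$, so you should replace the stationarity appeal by that argument (or explicitly add the Lipschitz assumption on $\nabla\omega$). Your closing remark about evaluating~\eqref{e7} at a possibly boundary point $x^{*}$ or $\hat{x}$ is a fair observation --- the paper glosses over it as well --- and your limiting justification is acceptable.
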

\begin{proof}
	Since all assumptions of Theorem \ref{t2} hold, we obtain assertions (i) and (ii).
	\par (iii) Denote by $X^{*}_{F}$=$\{x\in\Omega:F(x)=F^{*}\}$. From the proof of Theorem \ref{t2}(iii), there exists $x^{*}\in X^{*}_{F}$ is an accumulation point of $\{x^{k}\}$. Next, we prove $x^{*}$ is a weakly efficient point. Suppose by the contrary that there exists $\hat{x}\in\Omega$ such that $F(\hat{x})\prec_{C}F(x^{*})$. 
	 Substituting $x=\hat{x}$ and $\mu=0$ into inequality (\ref{e7}), we have
	\begin{equation}\label{e8}
		\langle c_{\ell}(x^{k}),F(x^{k+1})-F(\hat{x})\rangle\leq\ell(D_{\omega}(\hat{x},x^{k})-D_{\omega}(\hat{x},x^{k+1}))+(L-\ell)D_{\omega}(x^{k+1},x^{k}).
	\end{equation}
Summing the above inequality over $0$ to $k-1$, we obtain
\begin{align*}
	\ell(D_{\omega}(\hat{x},x^{0})-D_{\omega}(\hat{x},x^{k}))+(L-\ell)\sum\limits_{p=0}^{k-1}D_{\omega}(x^{p+1},x^{p})&\geq\langle \sum\limits_{p=0}^{k-1}c_{\ell}(x^{p}),F(x^{p+1})-F(\hat{x})\rangle\\
	&\geq\langle \sum\limits_{p=0}^{k-1}c_{\ell}(x^{p}),F(x^{*})-F(\hat{x})\rangle,
\end{align*}
where the second inequality follows from $F(x^{*})\preceq_{C}F(x^{p})$. Multiplying by $\frac{1}{k}$, we obtain  
\begin{equation}\label{eq19}
	\langle \frac{1}{k}\sum\limits_{p=0}^{k-1}c_{\ell}(x^{p}),F(x^{*})-F(\hat{x})\rangle\leq \frac{\ell D_{\omega}(\hat{x},x^{0})+(L-\ell)\sum\limits_{p=0}^{k-1}D_{\omega}(x^{p+1},x^{p})}{k}.
\end{equation}
Since $\frac{1}{k}\sum\limits_{p=0}^{k-1}c_{\ell}(x^{p})\in G$, this together with $F(\hat{x})\prec_{C}F(x^{*})$ imply 
\begin{eqnarray}\label{eq20}
	\langle \frac{1}{k}\sum\limits_{p=0}^{k-1}c_{\ell}(x^{p}),F(x^{*})-F(\hat{x})\rangle\geq \delta_{0},
\end{eqnarray}
where $\delta_{0}:=\min\limits_{c^{*}\in G}\langle c^{*},F(x^{*})-F(\hat{x})\rangle>0$. However, as $k\rightarrow\infty$, the right hand side of inequality (\ref{eq19}) tends to $0$ due to $\sum\limits_{k=0}^{\infty}D_{\omega}(x^{k+1},x^{k})<\infty$. This contradicts ineuqality (\ref{eq20}). Therefore, $x^{*}$ is a weakly efficient point. In what follows, we prove the whole sequence converges to $x^{*}$. Substituting $x=x^{*}$ and $\mu=0$ into inequality (\ref{e7}), we have
\begin{equation}\label{eq21}
	\langle c_{\ell}(x^{k}),F(x^{k+1})-F(x^{*})\rangle\leq\ell(D_{\omega}(x^{*},x^{k})-D_{\omega}(x^{*},x^{k+1}))+(L-\ell)D_{\omega}(x^{k+1},x^{k}).
\end{equation}
The left hand side of the above inequality is nonnegative due to $F^{*}\preceq_{C}F(x^{k+1})$, then
$$D_{\omega}(x^{*},x^{k+1})\leq D_{\omega}(x^{*},x^{k})+\frac{L-\ell}{\ell}D_{\omega}(x^{k+1},x^{k}).$$ 
This combines with $\sum\limits_{k=0}^{\infty}D_{\omega}(x^{k+1},x^{k})<\infty$ and Lemma \ref{l4} imply
$\{D_{\omega}(x^{*},x^{k})\}$ converges. On the other hand, $x^{*}$ is an accumulation point of $\{x^{k}\}$. It follows from Assumption \ref{a2}(i) that $\{D_{\omega}(x^{*},x^{k})\}$ converges to $0$. Therefore, by Assumption \ref{a2}(ii), we obtain sequence $\{x^{k}\}$ converges to $x^{*}$.
\par (iv) Summing inequality (\ref{eq21}) over $0$ to $k-1$, we obtain
\begin{align*}
	\ell(D_{\omega}(x^{*},x^{0})-D_{\omega}(x^{*},x^{k}))+(L-\ell)\sum\limits_{p=0}^{k-1}D_{\omega}(x^{p+1},x^{p})&\geq\langle \sum\limits_{p=0}^{k-1}c_{\ell}(x^{p}),F(x^{p+1})-F^{*}\rangle\\
	&\geq\langle \sum\limits_{p=0}^{k-1}c_{\ell}(x^{p}),F(x^{k})-F^{*}\rangle,
\end{align*}
where the second inequality follows from $F(x^{p+1})\preceq_{C}F(x^{p})$. Multiplying by $\frac{1}{k}$, we obtain assertion (iv). 
\par (v) Assertion (v) follows directly from assertion (iv).$\ \ \square$
\end{proof}
\subsection{Linear convergence}
This subsection is devoted to discuss the linear convergence of Algorithm \ref{A1}. Fristly, we present the following results under the assumption that $F(\cdot)$ is $(\mu,C,e)$-strongly convex relative to $\omega(\cdot)$ for $\mu>0$.
\begin{theo}\label{t4}
		The assumptions of Theorem \ref{t2} hold and $F(\cdot)$ is $(\mu,C,e)$-strongly convex relative to $\omega(\cdot)$ for some $\mu>0$. Then, we have
	\begin{itemize}
		\item[$\mathrm{(i)}$] there exists $F^{*}\preceq_{C}F(x^{k})$ for all $k$ such that $\lim\limits_{k\rightarrow\infty}F(x^{k})=F^{*}$.
		\item[$\mathrm{(ii)}$] $\sum\limits_{k=0}^{\infty}D_{\omega}(x^{k+1},x^{k})<\infty$.
		\item[$\mathrm{(iii)}$] every accumulation point of $\{x_{k}\}$ is a weakly efficient solution. Suppose that Assumption \ref{a2} holds, then $\{x_{k}\}$ converges to some weakly efficient solution $x^{*}$.
		\item[$\mathrm{(iv)}$] if $\ell\geq L$, then $D_{\omega}(x^{*},x^{k+1})\leq\frac{\ell-\delta\mu}{\ell}
		D_{\omega}(x^{*},x^{k})$.
	\end{itemize}
\end{theo}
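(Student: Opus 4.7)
The plan is to handle the four assertions in order, leaning on the machinery already assembled for Theorems \ref{t2} and \ref{t3}, and reserving the main effort for the linear contraction in (iv).

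Assertions (i) and (ii) should fall out immediately. Strong convexity strengthens the hypotheses of Theorem \ref{t2} rather than weakening them; we still have $(L,C,e)$-smoothness, bounded sublevel set, and $\ell>L/(1+\alpha(\omega))$. So we may simply invoke Theorem \ref{t2}(i)-(ii) to obtain a limit $F^\ast$ with $F^\ast\preceq_C F(x^k)$ and $\sum_k D_\omega(x^{k+1},x^k)<\infty$. No new work is required here.

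For assertion (iii), the proof transfers almost verbatim from Theorem \ref{t3}(iii). The key inequality \eqref{e7} was derived under the same hypotheses, so I would first substitute $x=\hat{x}$ and $\mu>0$, sum from $0$ to $k-1$, divide by $k$, and obtain the analog of \eqref{eq19} with an extra favorable term $-\delta\mu\sum_p D_\omega(\hat{x},x^p)/k \le 0$. Assuming toward contradiction that some accumulation point $x^\ast\in X^\ast_F$ is dominated by $\hat{x}\in\Omega$ (i.e. $F(\hat{x})\prec_C F(x^\ast)$), the minimax argument against $G$ yields $\min_{c^\ast\in G}\langle c^\ast,F(x^\ast)-F(\hat{x})\rangle>0$, contradicting the vanishing right-hand side from (ii). For convergence of the whole sequence, substitute $x=x^\ast$ into \eqref{e7}; nonnegativity of $\langle c_\ell(x^k),F(x^{k+1})-F^\ast\rangle$ (since $c_\ell(x^k)\in G\subset C^\ast$ and $F(x^{k+1})-F^\ast\in C$) implies $D_\omega(x^\ast,x^{k+1})\le D_\omega(x^\ast,x^k)+\tfrac{L-\ell}{\ell}D_\omega(x^{k+1},x^k)$, so Lemma \ref{l4} gives convergence of $\{D_\omega(x^\ast,x^k)\}$; combined with Assumption \ref{a2}(i) along a subsequence $x^{k_j}\to x^\ast$ to force the limit to be $0$, then Assumption \ref{a2}(ii) yields $x^k\to x^\ast$.

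The heart of the theorem is (iv), the geometric contraction. Here I would substitute $x=x^\ast$ and keep the $\mu>0$ term in \eqref{e7}, obtaining
\begin{equation*}
\langle c_\ell(x^k),F(x^{k+1})-F(x^\ast)\rangle\le(\ell-\delta\mu)D_\omega(x^\ast,x^k)-\ell D_\omega(x^\ast,x^{k+1})+(L-\ell)D_\omega(x^{k+1},x^k).
\end{equation*}
The left-hand side is nonnegative, as noted above. Under the step-size choice $\ell\ge L$, the last term $(L-\ell)D_\omega(x^{k+1},x^k)$ is nonpositive. Dropping both of these terms on their favorable sides and rearranging produces $\ell D_\omega(x^\ast,x^{k+1})\le(\ell-\delta\mu)D_\omega(x^\ast,x^k)$, which is exactly the stated linear rate with contraction factor $(\ell-\delta\mu)/\ell\in[0,1)$ since $\delta>0$ by \eqref{min} and $\mu>0$ by hypothesis.

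The only subtle point I anticipate, and the place I would be careful, is the sign argument $\langle c_\ell(x^k),F(x^{k+1})-F^\ast\rangle\ge 0$: this requires that $G\subseteq C^\ast$, which follows because $C^\ast$ is a convex cone containing every unit vector in $C^\ast$ and hence their convex hull $G$. Everything else is algebraic manipulation of \eqref{e7}, and the linear rate then transfers automatically to distances to $x^\ast$ in the Bregman sense.
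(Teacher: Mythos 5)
Your proposal is correct and follows essentially the same route as the paper: assertions (i)--(iii) are inherited from Theorems \ref{t2}--\ref{t3} (strong convexity being sharper than $C$-convexity), and (iv) is obtained exactly as in the paper by setting $x=x^{*}$ in inequality (\ref{e7}), using $F^{*}\preceq_{C}F(x^{k+1})$ together with $c_{\ell}(x^{k})\in G\subset C^{*}$ to drop the nonnegative left-hand side, discarding the nonpositive term $(L-\ell)D_{\omega}(x^{k+1},x^{k})$ under $\ell\geq L$, and rearranging.
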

\begin{proof}
	Since $(\mu,C,e)$-strong convexity is sharper than $C$-convexity, the assertions (i)-(iii) hold directly from Theorem \ref{t3}.
	\par (iv) Substituting $x=x^{*}$ into inequality (\ref{e7}), we have
	$$\langle c_{\ell}(x^{k}),F(x^{k+1})-F^{*}\rangle\leq(\ell-\delta\mu)D_{\omega}(x^{*},x^{k})-\ell D_{\omega}(x^{*},x^{k+1})+(L-\ell)D_{\omega}(x^{k+1},x^{k}).$$ 
	This together with $F^{*}\preceq_{C}F(x^{k+1})$ and $\ell\geq L$ yield
	$$D_{\omega}(x^{*},x^{k+1})\leq\frac{\ell-\delta\mu}{\ell}
	D_{\omega}(x^{*},x^{k}).$$
	$\ \ \square$
\end{proof}

Next, we show the Q-linear convergence result of $\{u_{0}(x^{k})\}$ with vector Bregman-PL inequality.
\begin{theo}
	Assume that vector Bregman-PL inequality (\ref{eb}) holds and $F(\cdot)$ is $(L,C,e)$-smooth relative to $\omega(\cdot)$ for some $L>0$. If $\ell\geq L$, then the sequence $\{x^{k}\}$ generated by Algorithm \ref{A1} satisfies
	$$u_{0}(x^{k+1})\leq\left(1-\frac{1}{\tau(\ell)}\right)u_{0}(x^{k}).$$
\end{theo}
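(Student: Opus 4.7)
The plan is to establish the per-step decrease $u_0(x^{k+1}) \le u_0(x^k) - v_\ell(x^k)$ and then close the argument by invoking the vector Bregman-PL inequality to replace $v_\ell(x^k)$ by $u_0(x^k)/\tau(\ell)$.

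First, I would sharpen the descent estimate of Lemma \ref{l1} from a bound at the maximizer $\varphi(\cdot)$ to one that is uniform in $c^*\in G$. From the $(L,C,e)$-smoothness of $F(\cdot)$, for every $c^*\in G$,
\[
\langle c^*, F(x^{k+1})-F(x^k)\rangle \le \langle c^*, JF(x^k)(x^{k+1}-x^k)\rangle + L D_\omega(x^{k+1},x^k)\langle c^*,e\rangle,
\]
and the normalization (\ref{max}) yields $\langle c^*,e\rangle\le 1$. By Proposition \ref{p1}(i), $c_\ell(x^k)$ attains $\max_{c^*\in G}\langle c^*,JF(x^k)(x^{k+1}-x^k)\rangle$, so the first right-hand term is bounded by $\langle c_\ell(x^k),JF(x^k)(x^{k+1}-x^k)\rangle$, which Proposition \ref{p1}(iii) evaluates to $-\ell(D_\omega(x^k,x^{k+1})+D_\omega(x^{k+1},x^k))$. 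Combining these estimates, using $\ell\ge L$ to discard the non-positive term $-(\ell-L)D_\omega(x^{k+1},x^k)$, and invoking Proposition \ref{p1}(iv), I arrive at the uniform inequality
\[
\langle c^*, F(x^{k+1})-F(x^k)\rangle \le -\ell D_\omega(x^k,x^{k+1}) = -v_\ell(x^k),\qquad \forall\,c^*\in G.
\]

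Second, I would propagate this uniform inequality to $u_0$. For any fixed $y\in\Omega$, adding $\langle c^*,F(x^k)-F(y)\rangle$ to both sides gives
\[
\langle c^*, F(x^{k+1})-F(y)\rangle \le \langle c^*, F(x^k)-F(y)\rangle - v_\ell(x^k),\qquad \forall\,c^*\in G.
\]
Since the pointwise inequality is preserved under taking $\min_{c^*\in G}$ and $v_\ell(x^k)$ is independent of $c^*$,
\[
\min_{c^*\in G}\langle c^*, F(x^{k+1})-F(y)\rangle \le \min_{c^*\in G}\langle c^*, F(x^k)-F(y)\rangle - v_\ell(x^k).
\]
Taking $\sup_{y\in\Omega}$ and recalling the definition (\ref{u}) produces $u_0(x^{k+1}) \le u_0(x^k) - v_\ell(x^k)$. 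The vector Bregman-PL inequality (\ref{eb}) implies $v_\ell(x^k)\ge u_0(x^k)/\tau(\ell)$, so
\[
u_0(x^{k+1}) \le u_0(x^k) - \frac{u_0(x^k)}{\tau(\ell)} = \left(1-\frac{1}{\tau(\ell)}\right)u_0(x^k).
\]

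The main technical obstacle lies in the first step: Lemma \ref{l1} only supplies a bound on $\varphi(F(x^{k+1})-F(x^k))=\max_{c^*\in G}\langle c^*,F(x^{k+1})-F(x^k)\rangle$, whereas the merit function $u_0(\cdot)$ has $\min_{c^*\in G}$ in its definition, so one needs the bound $\langle c^*,F(x^{k+1})-F(x^k)\rangle\le -v_\ell(x^k)$ to hold \emph{for every} $c^*\in G$, not merely at the worst-case $c^*$. The resolution comes from exploiting the maximality of $c_\ell(x^k)$ in Proposition \ref{p1}(i) together with the identity in Proposition \ref{p1}(iii), which converts the Jacobian inner product into a single $\ell$-dependent Bregman quantity that dominates every $c^*\in G$ simultaneously; the hypothesis $\ell\ge L$ is precisely what is needed to absorb the residual $L D_\omega(x^{k+1},x^k)$ term so that the uniform bound reduces exactly to $-v_\ell(x^k)$.
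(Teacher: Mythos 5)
Your proof is correct and takes essentially the same route as the paper: both establish the key per-step descent inequality $u_{0}(x^{k+1})\leq u_{0}(x^{k})-v_{\ell}(x^{k})$ from $(L,C,e)$-smoothness with $\ell\geq L$, the normalization (\ref{max}), and the characterization of $v_{\ell}$ via $c_{\ell}$ and $V_{\ell}$, and then invoke the vector Bregman-PL inequality (\ref{eb}). The only cosmetic difference is that you derive the descent through a bound on $\langle c^{*},F(x^{k+1})-F(x^{k})\rangle$ uniform in $c^{*}\in G$ using Proposition \ref{p1}(i), (iii), (iv), whereas the paper works with the sup-min chain directly, splitting the $\min$ over $G$ and recognizing $v_{\ell}(x^{k})$ as the value at the saddle point.
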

\begin{proof}
	Since $F(\cdot)$ is $(L,C,e)$-smooth relative to $\omega(\cdot)$ and $\ell\geq L$, we have
	$$F(x^{k+1})\preceq_{C}F(x^{k})+JF(x^{k})(x^{k+1}-x^{k})+\ell D_{\omega}(x^{k+1},x^{k})e.$$
Then, for all $x\in\Omega$ we obtain
$$F(x^{k+1})-F(x)\preceq_{C}F(x^{k})-F(x)+JF(x^{k})(x^{k+1}-x^{k})+\ell D_{\omega}(x^{k+1},x^{k})e.$$
It follows that
\begin{align*}
	&\sup\limits_{x\in\Omega}\min\limits_{c^{*}\in G}\langle c^{*},F(x^{k})-F(x)\rangle\\
	\geq&\sup\limits_{x\in\Omega}\min\limits_{c^{*}\in G}\{\langle c^{*},F(x^{k+1})-F(x)+JF(x^{k})(x^{k}-x^{k+1})-\ell D_{\omega}(x^{k+1},x^{k})e\rangle\}\\
	\geq&\sup\limits_{x\in\Omega}\{\min\limits_{c^{*}\in G}\langle c^{*},F(x^{k+1})-F(x)\rangle+\min\limits_{c^{*}\in G}\langle c^{*},JF(x^{k})(x^{k}-x^{k+1})\rangle-\ell D_{\omega}(x^{k+1},x^{k})\}\\
	=&\sup\limits_{x\in\Omega}\min\limits_{c^{*}\in G}\langle c^{*},F(x^{k+1})-F(x)\rangle +v_{\ell}(x^{k}),
\end{align*}
Hence,
\begin{align*}
 u_{0}(x^{k+1})&\leq u_{0}(x^{k})-v_{\ell}(x^{k})\\
 &\leq\left(1-\frac{1}{\tau(\ell)}\right)u_{0}(x^{k}).
\end{align*}
From (\ref{evu}), (\ref{eb}) and $\ell\geq L$, we have $\tau(\ell)\geq1$. Then $0<1-\frac{1}{\tau(\ell)}<1$, we derive that $\{u_{0}(x^{k})\}$ converges linearly.$\ \ \square$
\end{proof}

\section{Conclusions}
We extended the relative smoothness and relative strong convexity to vector-valued functions in the context of VOPs. Then we presented convergence rates for the interior Bregman gradient method using the generalized assumptions. In addition, we also devised a vector Bregman-PL inequality, which was used to prove linear convergence of the proposed method. It is worth noting that the convergence results extended the ones in \cite{com} to non-Lipschitz gradient continuous and non-strongly convex case.

\end{document}